\documentclass[11pt,reqno,final]{amsart}

\usepackage{amsmath}
\usepackage{amsthm}
\usepackage{amsfonts}  %mathbb, mathbf, mathfrac
\usepackage{amssymb}   %mathbb, mathbf
\usepackage{latexsym}
\usepackage[mathscr]{eucal}
\usepackage{amscd}

\theoremstyle{plain}
\newtheorem{theorem}{Theorem}

\newtheorem{proposition}[theorem]{Proposition}
\newtheorem{corollary}[theorem]{Corollary}
\newtheorem{lemma}[theorem]{Lemma}

\theoremstyle{remark}
\newtheorem{remark}{Remark}
\newtheorem{example}{Example}

\DeclareMathAlphabet\mathoo{U}{eur}{b}{n}
\DeclareMathOperator{\Real}{Re}

\usepackage[bookmarksnumbered, colorlinks, plainpages]{hyperref}
\hypersetup{colorlinks=true,linkcolor=red, anchorcolor=green, citecolor=cyan, urlcolor=red, filecolor=magenta, pdftoolbar=true}

\pagestyle{plain} \pagestyle{myheadings}%\markboth{\thepage\hfil}{\today}
\textheight=240mm              %высота страницы (ризограф) 25.5    Columntitle needs place.
\textwidth=160mm               %высота страницы (ризограф) 170
\voffset=-25mm
 \hoffset=-15mm

  \begin{document}\large
%\title[]{Functions of triangular operator matrices and operator divided differences}
\title[Green's function in the case of a triangular coefficient]{Green's function of the problem of bounded solutions\\ in the case of a block triangular coefficient}

\author{V.G. Kurbatov}
 \email{kv51@inbox.ru}
 %\cortext[cor1]{Corresponding author}
 \address{Department of Mathematical Physics,
Voronezh State University\\ 1, Universitetskaya Square, Voronezh 394018, Russia}
% \thanks{$^*$ The first author was supported by the Ministry of Education and Science of the Russian Federation under state order No.~3.1761.2017/4.6.}

\author{I.V. Kurbatova}
 \email{la\_soleil@bk.ru}
 \address{Department of Software Development and Information Systems Administration,
Vo\-ro\-nezh State University\\ 1, Universitetskaya Square, Voronezh 394018, Russia}
% \thanks{$^*$ The second author was supported by the Russian Foundation for Basic Research under research project No. 16-01-00197.}

\subjclass{47A60; 47A80; 34B27; 34B40; 34D09}

\keywords{bounded solutions problem; Green's function; divided difference with operator arguments; block matrix; causal operator}

\begin{abstract} It is known that the equation $x'(t)=Ax(t)+f(t)$, where $A$ is a bounded linear operator, has a unique bounded solution $x$ for any bounded continuous free term~$f$ if and only if the spectrum of the coefficient $A$ does not intersect the imaginary axis. The solution can be represented in the form
\begin{equation*}
x(t)=\int_{-\infty}^{\infty}\mathcal G(s)f(t-s)\,ds.
\end{equation*}
The kernel $\mathcal G$ is called Green's function. In this paper, the case when $A$ admits a representation by a block triangular operator matrix is considered. It is shown that the blocks of $\mathcal G$ are sums of special convolutions of Green's functions of diagonal blocks of $A$.
\end{abstract}

\maketitle

\section*{Introduction}\label{s:Introduction}
We consider the equation
\begin{equation}\label{e:eq}
x'(t)-Ax(t)=f(t),%\tag{$*$}
\end{equation}
where $A$ is a linear bounded operator acting in a Banach space $X$. We assume that $f$ is continuous.

The \emph{bounded solutions problem} is the problem of finding a bounded solution~$x$ that corresponds to a bounded free term $f$. The bounded solutions problem is closely connected with the problem of the exponential dichotomy of solutions. For the discussion of the bounded solutions problem from different points of view and related questions, see~\cite{Akhmerov-Kurbatov,BaskakovMS15:eng,Boichuk-Pokutnii,Chicone-Latushkin99,
Daletskii-Krein:eng,%Demidovich:eng,%Gearhart78,
Godunov94:ODE:eng,Hartman02:eng,Henry81:eng,%Kenney-Laub95,
Kurbatov-Kurbatova17:CMAM,Massera-Schaffer:eng,
Pankov1990:eng,Pechkurov12:eng%,Pechkurov15:eng,Pruss84
} and the references therein.

It is well known (see Theorem~\ref{t:Green}) that equation~\eqref{e:eq} has a unique bounded solution $x$ for any bounded continuous free term~$f$ if and only if the spectrum of the coefficient $A$ is disjoint from the imaginary axis. In this case, the solution can be represented in the form
\begin{equation*}
x(t)=\int_{-\infty}^{\infty}\mathcal G(s)f(t-s)\,ds.
\end{equation*}
The kernel $\mathcal G$ is called \emph{Green's function}.

In this paper, we consider the case when $A$ admits a representation in the form of a block triangular matrix~\eqref{e:triangular mat}.
The simplest $2\times2$ matrix representation of the coefficient $A$ is naturally induced by the decomposition of the space $X$ into the direct sum $X_-\oplus X_+$ of two spectral subspaces related to the parts of $\sigma(A)$ that lie in the left and right complex half-planes. This matrix representation is diagonal, but it can be `bad' in the sense that the corresponding projectors have large norms; in such a case it may be convenient to replace one of the subspaces by the orthogonal (or close to orthogonal) complement of the other; as a result one will arrive at a triangular matrix representation of $A$. Similarly, the spectrum of $A$ may be divided into clusters; so, it is again natural to use a diagonal or triangular matrix representation; the phenomenon of clusterization is discussed, e.g., in~\cite[lecture 12]{Godunov02:eng},~\cite{Davies-Higham03,Kurbatov-Kurbatova-LMA16}. Representation by triangular operator matrices is also natural for causal operators; in their turn, causal operators are widely used in control theory~\cite{Desoer-Vidyasagar,Feintuch-Saeks,Willems} and functional differential equations~\cite{Kurbatov-SMZh75:eng,Kurbatov-81DE:eng,Kurbatov99}. For other aspects of the theory of triangular operator matrices, see~\cite{Ceballos-Nunez-Tenorio,Cheung,Ghahramani,Gohberg-Goldberg-Kaashoek-II,Gohberg-Krein:eng,
Higham87:SIAM_Rev,Kadison-Singer,
Kressner-Luce-Statti17:ArXiv,Parlett76} and the references therein.

The main results of this paper are Theorems~\ref{t:exp of block tri mat} and~\ref{t:g_t[n]}; see also Theorem~\ref{t:Green}. From these theorems, it follows that Green's function is also induced by a triangular matrix and its blocks can be represented as the sums of special convolutions of Green's functions of the diagonal blocks of $A$; see Example~\ref{ex:g_t of block triang matr}.

Similar representations and related formulas for the fundamental solution of equation~\eqref{e:eq} were proposed, discussed, and applied by many authors~\cite{Carbonell-Jimenez-Pedroso08,Davis_Ch73,Dieci-Papini,Feynman51,Goodwin-Kuprov15,
Kenney-Laub98,Lutzky68,Najfeld-Havel95,
Rosenbloom1967,Van_Loan77,Van_Loan78,Wilcox68};
such formulas are widely used in numerical methods and other applications. We repeat some of these results in this paper (i) for the convenience of their comparison with our results connected with Green's function, and because (ii) we propose a new proof for them, (iii) and discuss the infinite-dimensional case, which requires some additional considerations in the proof; see Section~\ref{s:c-spectrum}.

The paper is organized as follows. In Section~\ref{s:operator functions}, we recall the definition of an analytic function with an operator argument. In Section~\ref{s:DE}, we describe the representation of the fundamental solution of initial value problem and Green's function of bounded solutions problem in the form of the analytic functions $\exp_{\pm,t}$ and $g_t$, respectively, of the coefficient~$A$. In Section~\ref{s:c-spectrum}, we discuss the subalgebra of operators induced by block triangular matrices. This subalgebra is not full, which leads to some technical difficulties in the subsequent presentation. In Section~\ref{s:func of block triang matr}, we describe (Theorem~\ref{t:func block triang matr}) a representation of blocks of an analytic function $f$ of a triangular matrix via contour integrals. In Section~\ref{s:divided differences}, the main terms of the formula from Theorem~\ref{t:func block triang matr} are represented as divided differences of $f$ with operator arguments (Theorem~\ref{t:boxdot=boxtimes}). In Section~\ref{s:exp[n] and g[n]}, we show that divided differences of $\exp_{\pm,t}$ and $g_t$ can be represented as convolutions with respect to the variable $t$ of functions of one variable (Theorem~\ref{t:exp_+[n] and g_t[n]}). In Section~\ref{s:exp and g of A}, we describe a representation of divided differences of $\exp_{\pm,t}$ and $g_t$ with operator arguments (Theorem~\ref{t:g_t[n]}). The combination of Theorems~\ref{t:exp of block tri mat} and~\ref{t:g_t[n]} allows one to represent the blocks of the fundamental solution of initial value problem and Green's function of the bounded solutions problem as special convolutions of the functions $\exp_{\pm,t}$ and $g_t$ applied to the diagonal blocks of $A$ (Examples~\ref{ex:exp of block triang matr} and~\ref{ex:g_t of block triang matr}).

\section{Functions of operators}\label{s:operator functions}
Let $X$ and $Y$ be non-zero complex Banach spaces. We denote by $\mathoo B(X,Y)$ the set of all bounded linear operators $A:\; X\to Y$. If $X=Y$, we use the brief notation $\mathoo B(X)$. The symbol $\mathbf1=\mathbf1_X$ stands for the identity operator from $\mathoo B(X)$.

Let $\mathoo B$ be a non-zero complex Banach algebra~\cite{Bourbaki_Theories_Spectrales:eng,Hille-Phillips:eng,Rudin-FA:eng} with the unit $\mathbf1$ (unital algebra). The main example of a unital Banach algebra is the algebra $\mathoo B(X)$; another important example is the algebra of all $n\times n$ matrices, $n\in\mathbb N$.

A subset $\mathoo R$ of an algebra $\mathoo B$ is called a \emph{subalgebra} if $A+B,\lambda A,AB\in\mathoo R$ for all $A,B\in\mathoo R$ and $\lambda\in\mathbb C$.
If the unit $\mathbf1$ of an algebra $\mathoo B$ belongs to its subalgebra $\mathoo R$, then $\mathoo R$ is called a \emph{subalgebra with a unit} or a \emph{unital subalgebra}.

A unital subalgebra $\mathoo R$ of a unital algebra $\mathoo B$ is called~\cite[ch.~1,~\S~3.6]{Bourbaki_Theories_Spectrales:eng} {\it full} if
it possesses the property:
if for $B\in\mathoo R$ there exists
$B^{-1}\in\mathoo B$ such that $BB^{-1}=B^{-1}B=\mathbf 1$, then $B^{-1}\in\mathoo R$.
Below (Remark~\ref{r:B_c is not full}) we will see that the subalgebra of all block triangular matrices is not always full.

Let $\mathoo B$ be a (nonzero) unital algebra and $A\in\mathoo B$. The set of all $\lambda\in\mathbb C$
such that the element $\lambda\mathbf1-A$ is not invertible is called the {\it spectrum}
of the element $A$ (in the algebra $\mathoo B$) and is denoted by the symbol $\sigma(A)$ or $\sigma_{\mathoo B}(A)$. The complement
$\rho(A)=\rho_{\mathoo B}(A)=\mathbb C\setminus\sigma(A)$ is called the \emph{resolvent set\/} of~$A$. The function
$
R_\lambda=(\lambda\mathbf1-A)^{-1}
$
is called the \emph{resolvent\/} of the element~$A$. The \emph{spectral radius} $r(A)=r_{\mathoo B}(A)$ is the radius of the smallest closed circle in $\mathbb C$ with center at 0 that contains $\sigma(A)$.

\begin{proposition}[{\rm\cite[Chap. 1, Sec. 4, Theorem 3]{Bourbaki_Theories_Spectrales:eng},~\cite[Theorem 10.18]{Rudin-FA:eng}}]\label{p:spectrum in a subalgebra}
Let $\mathoo R$ be a closed unital subalgebra of a unital algebra $\mathoo B$. Then the spectrum $\sigma_{\mathoo R}(A)$ of an element $A\in\mathoo R$ in the algebra $\mathoo R$ is the union of the spectrum $\sigma(A)$ of $A$ in the algebra $\mathoo B$ and {\rm(}possibly empty{\rm)} collection of bounded connected components of the resolvent set $\rho(A)$.
In particular, the spectral radii $r_{\mathoo R}(A)$ and $r_{\mathoo B}(A)$ coincide.
\end{proposition}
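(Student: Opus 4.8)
The plan is to establish the two inclusions $\sigma_{\mathoo B}(A)\subseteq\sigma_{\mathoo R}(A)$ and $\partial\sigma_{\mathoo R}(A)\subseteq\sigma_{\mathoo B}(A)$, and then to extract the stated description by a purely topological (connectedness) argument on the components of $\rho(A)$. First I would note that if $\lambda\mathbf1-A$ is not invertible in the larger algebra $\mathoo B$, then it is a fortiori not invertible in $\mathoo R$; hence $\sigma_{\mathoo B}(A)\subseteq\sigma_{\mathoo R}(A)$, equivalently $\rho_{\mathoo R}(A)\subseteq\rho(A)$. Both spectra are compact, since $r(A)\le\|A\|<\infty$, so each resolvent set has exactly one unbounded connected component.

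The key step, and the main obstacle, is the boundary inclusion. I would take $\lambda_0\in\partial\sigma_{\mathoo R}(A)$ and a sequence $\lambda_n\to\lambda_0$ with $\lambda_n\in\rho_{\mathoo R}(A)$, and argue by contradiction that $\lambda_0\notin\rho(A)$. Suppose $(\lambda_0\mathbf1-A)^{-1}$ existed in $\mathoo B$. By the openness of the set of invertible elements and the continuity of inversion in the Banach algebra $\mathoo B$, the inverses $(\lambda_n\mathbf1-A)^{-1}$ would converge in $\mathoo B$ to $(\lambda_0\mathbf1-A)^{-1}$. But each $(\lambda_n\mathbf1-A)^{-1}$ lies in $\mathoo R$ — this is exactly the meaning of $\lambda_n\in\rho_{\mathoo R}(A)$, the inverse being unique — and $\mathoo R$ is closed in $\mathoo B$; hence the limit $(\lambda_0\mathbf1-A)^{-1}$ would belong to $\mathoo R$, contradicting $\lambda_0\in\sigma_{\mathoo R}(A)$. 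This yields $\partial\sigma_{\mathoo R}(A)\subseteq\sigma_{\mathoo B}(A)$.

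Next I would run the connectedness argument. Let $U$ be a connected component of $\rho(A)$. Since any point of $\partial\sigma_{\mathoo R}(A)$ lies in $\sigma_{\mathoo B}(A)$, and $U\subseteq\rho(A)$ is disjoint from $\sigma_{\mathoo B}(A)$, the set $U$ contains no boundary point of $\sigma_{\mathoo R}(A)$. Consequently $U\cap\sigma_{\mathoo R}(A)$ is both closed in $U$ (as $\sigma_{\mathoo R}(A)$ is closed) and open in $U$ (each of its points is interior to $\sigma_{\mathoo R}(A)$); by connectedness it is either all of $U$ or empty, so $U$ lies entirely in $\sigma_{\mathoo R}(A)$ or entirely in $\rho_{\mathoo R}(A)$. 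The unbounded component is of the latter type: for $|\lambda|>r_{\mathoo B}(A)$ the Neumann series $\sum_{k\ge0}A^k/\lambda^{k+1}$ converges to $(\lambda\mathbf1-A)^{-1}$ and, being a limit of elements of the closed subalgebra $\mathoo R$, lies in $\mathoo R$, so such $\lambda$ belong to $\rho_{\mathoo R}(A)$. Therefore $\sigma_{\mathoo R}(A)=\sigma_{\mathoo B}(A)\cup\bigl(\rho(A)\cap\sigma_{\mathoo R}(A)\bigr)$, where the second term is the union of those components of $\rho(A)$ that are contained in $\sigma_{\mathoo R}(A)$, all of them necessarily bounded; this is exactly the asserted description.

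Finally, for the coincidence of spectral radii I would invoke the spectral radius formula $r(A)=\lim_{n}\|A^n\|^{1/n}$: since $\mathoo R$ carries the norm inherited from $\mathoo B$, the right-hand side is the same whether computed in $\mathoo R$ or in $\mathoo B$, so $r_{\mathoo R}(A)=r_{\mathoo B}(A)$ immediately. Alternatively, every added bounded component of $\rho(A)$ is separated from the unbounded component and hence lies inside the disk $\{|\lambda|\le r_{\mathoo B}(A)\}$, so it cannot enlarge the spectral radius.
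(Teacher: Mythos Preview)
The paper does not supply its own proof of this proposition; it is quoted as a known result from Bourbaki and Rudin. Your argument is correct and is essentially the classical proof found in those references (cf.\ Rudin, Theorem~10.18): the boundary inclusion $\partial\sigma_{\mathoo R}(A)\subseteq\sigma_{\mathoo B}(A)$ via closedness of $\mathoo R$ and continuity of inversion, then the clopen argument on the connected components of $\rho(A)$, with the Neumann series handling the unbounded component.
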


Let $A\in\mathoo B$ and let $U\subseteq\mathbb C$ be an open set that contains the spectrum $\sigma(A)$. The set $U$ must not be connected.
Let $f:\,U\to\mathbb C$ be an analytic function. The \emph{function $f$ of the element} $A$ is defined~\cite[ch.~V, \S~1]{Hille-Phillips:eng},~\cite[p.~17]{Daletskii-Krein:eng} by the formula
\begin{equation}\label{e:def of f(A)}
f(A)=\frac1{2\pi i}\int_\Gamma f(\lambda)(\lambda\mathbf1-A)^{-1}\,d\lambda,
\end{equation}
where the contour $\Gamma$ surrounds the set $\sigma_{\mathoo B}(A)$ in the counterclockwise direction and the function $f$ is analytic inside $\Gamma$.

\begin{proposition}[{\rm\cite[Theorem 5.2.5]{Hille-Phillips:eng},~\cite[Theorem 10.27]{Rudin-FA:eng}}]\label{p:func calc}
The mapping $f\mapsto f(A)$ preserves algebraic operations, i.~e.,
\begin{align*}
(f+g)(A)&=f(A)+g(A),\\
(\alpha f)(A)&=\alpha f(A),\\
(fg)(A)&=f(A)g(A),
\end{align*}
where $f+g$, $\alpha f$, and $fg$ are defined pointwise.
\end{proposition}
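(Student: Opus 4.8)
The plan is to dispose of the two linear properties immediately and then concentrate all the work on multiplicativity. Since $f$ and $g$ are both analytic on the open set $U\supseteq\sigma(A)$, a single contour $\Gamma$ surrounding $\sigma_{\mathoo B}(A)$ inside $U$ serves for both. The integrand in~\eqref{e:def of f(A)} depends linearly on the scalar function, and the contour integral of a $\mathoo B$-valued function is linear, so $(f+g)(A)=f(A)+g(A)$ and $(\alpha f)(A)=\alpha f(A)$ read off directly. These require no extra ideas.

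For the product, I would pick two contours $\Gamma_1$ and $\Gamma_2$, each surrounding $\sigma(A)$ counterclockwise inside $U$, arranged so that $\Gamma_1$ lies entirely in the bounded open region enclosed by $\Gamma_2$; thus every point of $\Gamma_1$ is interior to $\Gamma_2$, while every point of $\Gamma_2$ is exterior to $\Gamma_1$. Writing $f(A)$ with the contour $\Gamma_1$ and variable $\lambda$, and $g(A)$ with the contour $\Gamma_2$ and variable $\mu$, and interchanging the (absolutely convergent) integrals by Fubini, I obtain
\begin{equation*}
f(A)g(A)=\frac1{(2\pi i)^2}\int_{\Gamma_1}\int_{\Gamma_2} f(\lambda)g(\mu)\,R_\lambda R_\mu\,d\mu\,d\lambda .
\end{equation*}

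The decisive step is the resolvent identity $R_\lambda R_\mu=(R_\lambda-R_\mu)/(\mu-\lambda)$, valid whenever $\lambda\neq\mu$, which holds throughout $\Gamma_1\times\Gamma_2$ because the contours are disjoint. Substituting and splitting into two pieces, in the term carrying $R_\lambda$ I integrate in $\mu$ first and invoke Cauchy's integral formula: $\frac1{2\pi i}\int_{\Gamma_2}\frac{g(\mu)}{\mu-\lambda}\,d\mu=g(\lambda)$, since $\lambda$ (on $\Gamma_1$) is enclosed by $\Gamma_2$. In the term carrying $R_\mu$ I integrate in $\lambda$ first and get $\frac1{2\pi i}\int_{\Gamma_1}\frac{f(\lambda)}{\mu-\lambda}\,d\lambda=0$, since $\mu$ (on $\Gamma_2$) lies outside $\Gamma_1$, where $f$ is analytic. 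What survives is $\frac1{2\pi i}\int_{\Gamma_1} f(\lambda)g(\lambda)R_\lambda\,d\lambda=(fg)(A)$, as required.

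The main obstacle is not any single estimate but the geometric bookkeeping: the whole computation hinges on the strict nesting $\Gamma_1$ inside $\Gamma_2$, which is exactly what makes one inner Cauchy integral equal $g(\lambda)$ and the other vanish. Reversing the roles of the two contours would swap these values and the identity would fail, so the care lies in fixing that configuration at the outset; the Fubini interchange and the continuity of the integrand on compact contours are then routine.
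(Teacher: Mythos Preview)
Your argument is correct and is precisely the classical proof of the multiplicativity of the Riesz--Dunford calculus. The paper does not give its own proof of this proposition; it simply cites \cite[Theorem 5.2.5]{Hille-Phillips:eng} and \cite[Theorem 10.27]{Rudin-FA:eng}, and your write-up is essentially the argument found in those references (nested contours, resolvent identity, Cauchy's formula on the inner contour and Cauchy's theorem on the outer one).

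One small expository quibble: your closing sentence, that reversing the nesting ``would make the identity fail,'' is not quite right. With $\Gamma_2$ inside $\Gamma_1$ the two inner integrals simply swap roles---the $R_\lambda$-term now vanishes and the $R_\mu$-term produces $f(\mu)$---and one still arrives at $(fg)(A)$, as must be the case since the statement is symmetric in $f$ and $g$. The nesting is needed so that \emph{one} of the two Cauchy integrals evaluates to a point value and the other to zero, not to single out a particular orientation.
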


\begin{corollary}\label{c:r_0}
For the function $r_{\lambda_0}(\lambda)=\frac1{\lambda_0-\lambda}$, $\lambda_0\in\rho(A)$, we have
\begin{equation*}
r_{\lambda_0}(A)=(\lambda_0\mathbf1-A)^{-1}.
\end{equation*}
\end{corollary}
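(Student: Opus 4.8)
The plan is to deduce the corollary from the multiplicative property of the functional calculus recorded in Proposition~\ref{p:func calc}. First I would note that since $\lambda_0\in\rho(A)$, the point $\lambda_0$ does not lie in $\sigma(A)$, so the function $r_{\lambda_0}(\lambda)=\frac1{\lambda_0-\lambda}$ is analytic on the open set $U=\mathbb C\setminus\{\lambda_0\}\supseteq\sigma(A)$. Hence $r_{\lambda_0}(A)$ is well defined by formula~\eqref{e:def of f(A)}.

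Next I would introduce the polynomial $p(\lambda)=\lambda_0-\lambda$, which is entire and therefore analytic on a neighbourhood of $\sigma(A)$. Applying the functional calculus to the constant function $\lambda\mapsto\lambda_0$ and to the identity function $\lambda\mapsto\lambda$ (equivalently, using the standard normalization of the functional calculus together with the additivity and scalar-homogeneity of Proposition~\ref{p:func calc}), one obtains $p(A)=\lambda_0\mathbf1-A$.

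The key observation is the pointwise identity $p(\lambda)\,r_{\lambda_0}(\lambda)=(\lambda_0-\lambda)\cdot\frac1{\lambda_0-\lambda}=1$, valid on all of $U$. Applying the product rule of Proposition~\ref{p:func calc}, and noting that the constant function $1$ is sent to $\mathbf1$, I would conclude
\[
p(A)\,r_{\lambda_0}(A)=r_{\lambda_0}(A)\,p(A)=\mathbf1,
\]
where commutativity of the two factors follows from $p\,r_{\lambda_0}=r_{\lambda_0}\,p$ pointwise together with multiplicativity. Thus $r_{\lambda_0}(A)$ is a two-sided inverse of $\lambda_0\mathbf1-A=p(A)$, that is, $r_{\lambda_0}(A)=(\lambda_0\mathbf1-A)^{-1}$.

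The only delicate point—and the main thing to pin down—is justifying that polynomials, and in particular the constant $1$ and the linear function $p$, are mapped by the functional calculus to the expected elements, since Proposition~\ref{p:func calc} as stated records only preservation of sums, scalar multiples, and products. This normalization is part of the standard holomorphic functional calculus and can be read off directly from~\eqref{e:def of f(A)}: the constant $1$ yields the Riesz projection $\frac1{2\pi i}\int_\Gamma(\lambda\mathbf1-A)^{-1}\,d\lambda=\mathbf1$ onto the whole spectrum, and the identity yields $A$. Once this normalization is in hand, no serious obstacle remains and the corollary is essentially immediate from multiplicativity.
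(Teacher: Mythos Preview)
Your proposal is correct and follows essentially the same approach as the paper, which simply states that the proof follows from Proposition~\ref{p:func calc}; you have merely spelled out the standard details (multiplicativity applied to $p\cdot r_{\lambda_0}=1$, together with the normalization $1\mapsto\mathbf1$, $\lambda\mapsto A$) that the paper leaves implicit.
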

\begin{proof}
The proof follows from Proposition~\ref{p:func calc}.
\end{proof}

\section{The differential equation with a constant coefficient}\label{s:DE}
In this Section, we describe three analytic functions that are closely related to the representation of solutions of linear differential equations with constant coefficients.

For $\lambda\in\mathbb C$ and $t\in\mathbb R$, we consider the functions
\begin{align*}
\exp_{+,\,t}(\lambda)&=\begin{cases}
e^{\lambda t}, & \text{if $t>0$},\\
0, & \text{if $t<0$},\end{cases}\\
\exp_{-,\,t}(\lambda)&=\begin{cases}
0, & \text{if $t>0$},\\
-e^{\lambda t}, & \text{if $t<0$},\end{cases}
\\
g_t(\lambda)&=\begin{cases}
\exp_{-,\,t}(\lambda), & \text{if $\Real\lambda>0$},\\
\exp_{+,\,t}(\lambda), & \text{if $\Real\lambda<0$}.
\end{cases}
\end{align*}
These functions are undefined for $t=0$. The function $g_t$ is also undefined for $\Real\lambda=0$. For any fixed $t\neq0$, all three functions are analytic on their domains.

Let $X$ be a Banach space and $A\in\mathoo B(X)$. We consider the differential equation
\begin{equation}\label{e:x'=Ax+f}
x'(t)=Ax(t)+f(t),\qquad t\in\mathbb R.
\end{equation}

We recall two well-known theorems. The first theorem shows that $\exp_{\pm,\,(\cdot)}(A)$ are fundamental solutions of the initial value problems.
\begin{theorem}[{\rm\cite[ch.~1, \S~4]{Daletskii-Krein:eng}, \cite[ch.~IV, Corollary 2.1]{Hartman02:eng}}]\label{t:IVP}
Let $f:\,\mathbb R\to X$ be a continuous function.
The solution of the initial value problem
\begin{align*}
x'(t)&=Ax(t)+f(t),\qquad t>0,\\
x(0)&=0
\end{align*}
is the function
\begin{equation*}
x(t)=\int_0^t\exp_{+,\,s}(A)\,f(t-s)\,ds,\qquad t>0.
\end{equation*}
The solution of the initial value problem
\begin{align*}
x'(t)&=Ax(t)+f(t),\qquad t<0,\\
x(0)&=0
\end{align*}
is the function
\begin{equation*}
x(t)=\int_t^0\exp_{-,\,s}(A)\,f(t-s)\,ds,\qquad t<0.
\end{equation*}
\end{theorem}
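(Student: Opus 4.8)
The plan is to reduce both assertions to the classical Duhamel (variation-of-constants) formula and then verify the proposed function by direct differentiation. First I would observe that on the relevant ranges of integration the operator functions $\exp_{\pm,\,s}(A)$ collapse to ordinary operator exponentials. Indeed, in the first integral one has $t>0$ and $s$ ranging over $(0,t)$, so $s>0$ and therefore $\exp_{+,\,s}(\lambda)=e^{s\lambda}$; by Proposition~\ref{p:func calc} the functional calculus then gives $\exp_{+,\,s}(A)=e^{sA}$. Likewise, in the second integral $t<0$ forces $s<0$, where $\exp_{-,\,s}(\lambda)=-e^{s\lambda}$ and hence $\exp_{-,\,s}(A)=-e^{sA}$.

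Next I would perform the substitution $u=t-s$ in each integral. A short computation shows that both formulas collapse to the single expression
\begin{equation*}
x(t)=\int_0^t e^{(t-u)A}f(u)\,du,
\end{equation*}
the sign in the definition of $\exp_{-,\,s}$ having been chosen precisely so that the reversal of the limits of integration reproduces the very same formula in the case $t<0$. In particular $x(0)=0$, which disposes of the initial condition in both problems at once, and reduces the theorem to checking that this unified expression solves $x'(t)=Ax(t)+f(t)$.

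It then remains to differentiate. I would apply the Leibniz rule for an integral with a variable endpoint, writing $G(t,u)=e^{(t-u)A}f(u)$, so that
\begin{equation*}
x'(t)=G(t,t)+\int_0^t\frac{\partial}{\partial t}G(t,u)\,du.
\end{equation*}
The boundary term is $G(t,t)=e^{0}f(t)=f(t)$, while $\frac{\partial}{\partial t}e^{(t-u)A}=Ae^{(t-u)A}$ (which follows from the functional calculus applied to $\frac{d}{dt}e^{(t-u)\lambda}=\lambda e^{(t-u)\lambda}$, together with the boundedness of $A$). Pulling the constant operator $A$ outside the integral then yields $x'(t)=f(t)+Ax(t)$, as required, uniformly for $t>0$ and for $t<0$.

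The point that needs genuine care -- and the only step I expect to be a real obstacle -- is the justification of the Leibniz rule in the Banach-space-valued setting under the sole hypothesis that $f$ is continuous. Since $A$ is bounded, both the map $(t,u)\mapsto e^{(t-u)A}f(u)$ and its $t$-derivative $(t,u)\mapsto Ae^{(t-u)A}f(u)$ are jointly continuous in $(t,u)$ on the relevant compact parameter ranges, so differentiation under the integral and the handling of the moving endpoint are legitimate by the standard vector-valued analogue of the Leibniz rule. Uniqueness (implicit in the phrase ``the solution'') is not needed for the verification itself, but would follow immediately from linearity: any two solutions differ by a solution $y$ of $y'=Ay$ with $y(0)=0$, which vanishes identically.
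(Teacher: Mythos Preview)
Your argument is correct. Note, however, that the paper does not actually prove this theorem: it is stated with citations to Daletski\u{\i}--Kre\u{\i}n and Hartman and then used as a known result, so there is no ``paper's own proof'' to compare against. Your reduction of both cases to the single Duhamel formula $x(t)=\int_0^t e^{(t-u)A}f(u)\,du$ via the substitution $u=t-s$ is clean (the sign built into $\exp_{-,\,s}$ is indeed exactly what makes the $t<0$ case match), and the subsequent differentiation under the integral sign is legitimate because $A$ is bounded, making $(t,u)\mapsto e^{(t-u)A}f(u)$ and its $t$-derivative jointly continuous. The uniqueness remark is also correct and worth including, since the theorem speaks of \emph{the} solution.
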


The function $t\mapsto\exp_{+,\,t}(A)$ is called~\cite{Hartman02:eng} the \emph{fundamental solution} for equation~\eqref{e:x'=Ax+f}.

Now we turn to the \emph{bounded solutions problem}, i.e. the problem of seeking bounded solution $x:\,\mathbb R\to X$ under the assumption that the free term $f:\,\mathbb R\to X$ is a bounded function.

\begin{theorem}[{\rm\cite[Theorem 4.1, p.~81]{Daletskii-Krein:eng}}]\label{t:Green}
Let $A\in\mathoo B(X)$.
Equation~\eqref{e:x'=Ax+f} has a unique bounded on $\mathbb R$ solution $x$ for any bounded continuous function $f$ if and only if the spectrum $\sigma(A)$ of $A$ does not intersect the imaginary axis.
This solution admits the representation
\begin{equation*}
x(t)=\int_{-\infty}^\infty \mathcal G(s)f(t-s)\,ds,
\end{equation*}
where
\begin{equation*}%\label{e:Green's function}
\mathcal G(t)=g_t(A),\qquad t\neq0.
\end{equation*}
\end{theorem}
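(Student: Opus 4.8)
The plan is to prove the two implications separately, reducing everything to the splitting of $\sigma(A)$ induced by the imaginary axis. For sufficiency, assume $\sigma(A)$ does not meet the imaginary axis and set $\sigma_-=\sigma(A)\cap\{\Real\lambda<0\}$ and $\sigma_+=\sigma(A)\cap\{\Real\lambda>0\}$. These are disjoint compact sets, so I would choose disjoint open neighborhoods $U_\mp\supseteq\sigma_\mp$ and let $\chi_\pm$ be the function equal to $1$ on $U_\pm$ and $0$ on $U_\mp$. Since $U_-\cup U_+$ need not be connected, $\chi_\pm$ is analytic on a neighborhood of $\sigma(A)$, and Proposition~\ref{p:func calc} shows that $P_\pm=\chi_\pm(A)$ are complementary idempotents with $P_-+P_+=\mathbf1$. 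Because $g_t$ coincides with $\exp_{+,\,t}$ on $U_-$ and with $\exp_{-,\,t}$ on $U_+$, the same proposition gives $\mathcal G(t)=g_t(A)=\exp_{+,\,t}(A)P_-+\exp_{-,\,t}(A)P_+$, so that $\mathcal G(s)=e^{sA}P_-$ for $s>0$ and $\mathcal G(s)=-e^{sA}P_+$ for $s<0$. Since the spectrum of the restriction of $A$ to $P_-X$ (resp. $P_+X$) lies strictly to the left (resp. right) of the imaginary axis, each block decays exponentially as $s\to+\infty$ (resp. $s\to-\infty$), and hence $\mathcal G$ is integrable in operator norm over $\mathbb R$.

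Next I would verify that $x(t)=\int_{-\infty}^{\infty}\mathcal G(s)f(t-s)\,ds$ solves equation~\eqref{e:x'=Ax+f}. Substituting $\tau=t-s$ turns this into
\[
x(t)=\int_{-\infty}^{t}e^{(t-\tau)A}P_-f(\tau)\,d\tau-\int_{t}^{\infty}e^{(t-\tau)A}P_+f(\tau)\,d\tau,
\]
which is exactly the dichotomy form of the bounded solution; its two pieces are the $\exp_{\pm}$-propagated integrals of Theorem~\ref{t:IVP} pushed to the half-lines on which they decay. Differentiating under the integral sign (justified by the exponential decay of $\mathcal G$ and the continuity of $f$), the two variable endpoints $\tau=t$ contribute the boundary terms $P_-f(t)$ and $P_+f(t)$, whose sum is $(P_-+P_+)f(t)=f(t)$, while differentiating the integrands reassembles $Ax(t)$. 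Hence $x'(t)-Ax(t)=f(t)$, and boundedness is immediate from $\|x(t)\|\le\|f\|_\infty\int_{-\infty}^{\infty}\|\mathcal G(s)\|\,ds$.

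For uniqueness, the difference $y$ of two bounded solutions satisfies $y'=Ay$, so $y(t)=e^{tA}y(0)$; boundedness of $t\mapsto e^{tA}P_-y(0)$ as $t\to-\infty$ and of $t\mapsto e^{tA}P_+y(0)$ as $t\to+\infty$ forces $P_-y(0)=P_+y(0)=0$, i.e. $y\equiv0$. For necessity, suppose the bounded solutions problem is uniquely solvable and, aiming at a contradiction, that $is_0\in\sigma(A)$ for some $s_0\in\mathbb R$. The substitution $y(t)=e^{-is_0 t}x(t)$ replaces $A$ by $A-is_0\mathbf1$ and preserves boundedness of both solution and free term, so I may assume $s_0=0$, i.e. $0\in\sigma(A)$ and $A$ is not invertible. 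Unique solvability defines a linear solution operator $G\colon f\mapsto x$, which is bounded by the closed graph theorem (a uniform limit of solutions is again a solution). Since the equation is autonomous, $G$ commutes with all translations; a constant free term $v$ is translation invariant, so $Gv$ is translation invariant, hence constant, say $G_0v$. The equation then forces $AG_0v=-v$ for every $v$, i.e. $AG_0=-\mathbf1$, so $A$ is surjective; being non-invertible, it cannot also be injective by the open mapping theorem, and there is $w\neq0$ with $Aw=0$. But then the constant function $w$ and the zero function are two bounded solutions for $f=0$, contradicting uniqueness.

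The hard part, I expect, is the necessity direction: one must first manufacture the solution operator $G$ and establish that it is bounded and translation invariant before the constant-function trick can be run, so the closed graph argument and the autonomy of the equation carry the real weight there. In the sufficiency direction the only genuine subtlety is justifying differentiation under the integral and recognizing that the full identity $P_-+P_+=\mathbf1$ appears precisely as the jump of $\mathcal G$ across $t=0$, which is exactly what reproduces the free term $f(t)$.
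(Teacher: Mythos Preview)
The paper does not prove this theorem at all: it is stated as a known result with a citation to \cite[Theorem 4.1, p.~81]{Daletskii-Krein:eng} and no proof environment follows. Your proposal therefore cannot be compared against a proof in the paper, only checked for correctness on its own.

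Your argument is the standard one and is essentially correct. The sufficiency direction via the Riesz projections $P_\pm=\chi_\pm(A)$, the identification $\mathcal G(s)=e^{sA}P_-$ for $s>0$ and $\mathcal G(s)=-e^{sA}P_+$ for $s<0$, the exponential decay estimates, and the verification that the convolution solves~\eqref{e:x'=Ax+f} with the boundary contributions summing to $(P_-+P_+)f(t)=f(t)$ are all sound. Uniqueness via $y(t)=e^{tA}y(0)$ and the growth of each spectral component is likewise fine. For necessity, your closed graph/translation invariance argument works: from $AG_0=-\mathbf1$ you get surjectivity of $A$, and since a bounded surjective operator that fails to be invertible must have nontrivial kernel (open mapping), the constant function $w$ in that kernel violates uniqueness for $f=0$. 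One small wording point: you should make explicit that the closed graph argument uses the space of bounded continuous functions with the sup norm, and that ``a uniform limit of solutions is again a solution'' amounts to the closedness of the differential operator $x\mapsto x'-Ax$ on that space; this is routine but worth stating.
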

The function $\mathcal G$ is called~\cite{Daletskii-Krein:eng} the \emph{Green's function} of the bounded solutions problem for equation~\eqref{e:x'=Ax+f}.

\section{Causal spectrum of a block triangular matrix}\label{s:c-spectrum}
Let a Banach space $X$ be represented as the direct sum of its closed nonzero subspaces $X_i$, $i=1,\dots,n$:
\begin{equation*}
X=X_1\oplus X_2\oplus\dots\oplus X_n.
\end{equation*}
This means that every $x\in X$ can be uniquely represented in the form
\begin{equation*}
x=x_1+x_2+\dots+x_n,
\end{equation*}
where $x_i\in X_i$, $i=1,\dots,n$. It is easy to prove that the norm on $X$ is equivalent to the norm
\begin{equation*}
\lVert x\rVert=\lVert x_1\rVert+\lVert x_2\rVert+\dots+\lVert x_n\rVert.
\end{equation*}
We denote by $\mathoo M=\mathoo M(X_1,X_2,\dots,X_n)$ the set of all operator matrices
\begin{equation*}
\{\,T_{ij}\in\mathoo B(X_j,X_i):\,i,j=1,\dots,n\,\}.
\end{equation*}
We endow $\mathoo M$ with the norm $\lVert \{\,T_{ij}\,\}\rVert=\max_j\sum_{i=1}^n\lVert T_{ij}\rVert$. It is easy to show that $\mathoo M$ is a unital Banach algebra with respect to the usual matrix multiplication, and the Banach algebra $\mathoo M$ is isomorphic (not isometrically) to the algebra $\mathoo B(X)$. As usual, we do not distinguish very carefully matrices and operators induced by them.

We denote by $\mathoo M^+=\mathoo M^+(X_1,X_2,\dots,X_n)$ the set of all lower triangular matrices
\begin{equation}\label{e:triangular mat}
\begin{pmatrix}
   A_{11} & 0 & \dots & 0 & 0 \\
   A_{21} & A_{22} & \dots & 0 & 0 \\
   \dots & \dots & \dots & \dots & \dots \\
   A_{n-1,1} & A_{n-1,2} & \dots & A_{n-1,n-1} & 0 \\
   A_{n,1} & A_{n,2} & \dots & A_{n,n-1} & A_{n,n} \\
 \end{pmatrix}.
\end{equation}
We denote by $\mathoo B^+(X)$ the class of operators induces by $\mathoo M^+$. Clearly, $\mathoo M^+$ is a closed sub\-algeb\-ra of the algebra $\mathoo M$. Therefore, $\mathoo B^+(X)$ is a closed subalgebra of the algebra $\mathoo B(X)$. We call operators from the class $\mathoo B^+(X)$ \emph{causal} in analogy with a similar class of operators in the control theory~\cite{Desoer-Vidyasagar,Feintuch-Saeks,Willems} and in the theory of functional differential equations~\cite{Kurbatov-SMZh75:eng,Kurbatov-81DE:eng,Kurbatov99}, see also the references therein.
Namely, if one interprets the indices $i=1,\dots,n$ as successive instants of time, then the triangularity of a matrix $A$ means that the value $(Ax)_i$ of the 'output' $Ax$ at any instant $i$ may depend only on values $x_j$ of the `input' $x$ at the previous instants $j\le i$.

\begin{remark}\label{r:B_c is not full}
The subalgebra $\mathoo M^+$ (and consequently, the subalgebra $\mathoo B^+(X)$) may be not full if the space $X$ is infinite-dimensional. We give a corresponding example. Let $X$ be the space $L_p(\mathbb R)$, $1\le p\le\infty$. We represent $X=L_p(\mathbb R)$ as $L_p(-\infty,0]\oplus L_p[0,\infty)$, where $L_p(-\infty,0]$ and $L_p[0,\infty)$ are the subspaces of functions that are equal to zero outside $(-\infty,0]$ and $[0,\infty)$ respectively. Clearly, the operator of delay $\bigl(Sx\bigr)(t)=x(t-1)$ is induced by a lower triangular matrix (thus it is causal), but the inverse operator $\bigl(S^{-1}x\bigr)(t)=x(t+1)$ is induced by an upper triangular matrix (thus $S^{-1}$ is not causal). Consequently, in contrast to the finite-dimensional case, the (ordinary) spectrum of a triangular matrix may be not the union of the spectra of its diagonal blocks, see Proposition~\ref{p:c-spectrum}. See a more detailed discussion of this phenomenon in~\cite{Han-Lee-Lee}.
\end{remark}

If an operator $T\in\mathoo B^+(X)$ is invertible and the inverse operator belongs to $\mathoo B^+(X)$, we say that $T$ is \emph{causally invertible}.
We call the spectrum of $T\in\mathoo B^+(X)$ in the algebra $\mathoo B^+(X)$ the \emph{causal spectrum} and denote it by $\sigma^+(T)$. Clearly,
\begin{equation*}
\sigma(T)\subseteq\sigma^+(T).
\end{equation*}
We denote by $\rho^+(T)$ the \emph{causal resolvent set} $\mathbb C\setminus\sigma^+(T)$.
The same terminology and notation will be used for matrices $M\in\mathoo M^+$.

We recall that an open set $D\subseteq\mathbb C$ is called \emph{simply-connected} if any simple closed curve in $D$ can be shrunk continuously to a point.

\begin{proposition}\label{p:simply-connected domain}
Let the domain $D\subseteq\mathbb C$ of an analytic function $f$ be simply-connected {\rm(}examples of such functions are $\exp_{\pm,\,t}$ and $g_t${\rm)}. Let\/ $T\in\mathoo B^+(X)$. Then $\sigma^+(T)\subset D$ provided $\sigma(T)\subset D$.
Thus the function $f(T)$ of a causal operator\/ $T$ is defined in algebras $\mathoo B(X)$ and $\mathoo B^+(X)$ simultaneously.
\end{proposition}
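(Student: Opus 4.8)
The plan is to combine the description of the causal spectrum furnished by Proposition~\ref{p:spectrum in a subalgebra} with a purely topological fact about simply-connected sets. Since $\mathoo B^+(X)$ is a closed unital subalgebra of $\mathoo B(X)$, Proposition~\ref{p:spectrum in a subalgebra} says that $\sigma^+(T)=\sigma_{\mathoo B^+(X)}(T)$ is the union of $\sigma(T)$ with a (possibly empty) collection of \emph{bounded} connected components of $\rho(T)=\mathbb C\setminus\sigma(T)$. As $\sigma(T)\subseteq D$ by hypothesis, it therefore suffices to prove that every bounded connected component $K$ of $\rho(T)$ is already contained in $D$.

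First I would record the elementary separation step. Suppose, for contradiction, that some bounded component $K$ of $\rho(T)$ meets $\mathbb C\setminus D$, and pick $p\in K\cap(\mathbb C\setminus D)$. Because $\sigma(T)\subseteq D$, we have $\mathbb C\setminus D\subseteq\rho(T)$; hence the connected component $C$ of the closed set $\mathbb C\setminus D$ containing $p$ is a connected subset of $\rho(T)$ meeting $K$, so $C\subseteq K$, and in particular $C$ is bounded. Thus everything reduces to the following topological fact: \emph{if $D\subseteq\mathbb C$ is open and each of its connected components is simply-connected, then every connected component of $\mathbb C\setminus D$ is unbounded.} For the two functions that matter here this is immediate: the domain of $\exp_{\pm,\,t}$ is all of $\mathbb C$, so $\mathbb C\setminus D=\emptyset$, while the domain of $g_t$ is $\{\Real\lambda\neq0\}$, whose complement is the whole imaginary axis, which is connected and unbounded.

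Granting this fact, the bounded component $C$ above cannot exist, so $K\subseteq D$; feeding this back through Proposition~\ref{p:spectrum in a subalgebra} gives $\sigma^+(T)\subseteq D$. The last assertion of the statement then follows because, $\sigma^+(T)$ being contained in $D$, one may choose in~\eqref{e:def of f(A)} a contour $\Gamma\subset D$ encircling $\sigma^+(T)$; for every $\lambda\in\Gamma$ we have $\lambda\in\rho^+(T)$, so the integrand $(\lambda\mathbf1-T)^{-1}$ lies in $\mathoo B^+(X)$, and since $\mathoo B^+(X)$ is closed the integral defines the same element in $\mathoo B^+(X)$ as in $\mathoo B(X)$, i.e.\ $f(T)$ is computed identically in both algebras.

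The genuine obstacle is proving the topological fact in full generality, since $\mathbb C\setminus D$ is only closed (not compact) and its components may accumulate on one another, so the naive picture ``a hole surrounded by $D$'' must be made rigorous. I would argue by contradiction through winding numbers. If $F_0$ were a bounded, hence compact, component of $\mathbb C\setminus D$, then after intersecting with a large ball and using that in a compact set components coincide with quasi-components (\v{S}ura--Bura), one isolates $F_0$ by a relatively clopen set $N\supseteq F_0$ lying at positive distance from the rest of the complement. A grid (square-covering) construction around $N$ then yields a $1$-cycle $\Gamma$ lying entirely in $D$ with winding number $1$ about a chosen point $a\in F_0$. At least one connected closed curve $\gamma$ making up $\Gamma$ has nonzero winding number about $a$; being connected and contained in $D$, it lies in a single component $V$ of $D$, which is simply-connected. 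But a closed curve in a simply-connected open set has winding number $0$ about every point of its complement, and $a\notin V$ — a contradiction. This establishes the topological fact and completes the proof.
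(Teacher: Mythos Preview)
Your argument is correct and follows exactly the paper's route: apply Proposition~\ref{p:spectrum in a subalgebra} to reduce to showing that every bounded component of $\rho(T)$ lies in $D$, and then invoke simple-connectedness of $D$. The paper simply asserts this topological step in one sentence, whereas you supply a full justification via the \v{S}ura--Bura lemma and winding numbers (and also spell out why the integral~\eqref{e:def of f(A)} yields the same element in $\mathoo B^+(X)$ as in $\mathoo B(X)$); the extra care is appropriate, particularly because the domain of $g_t$ is disconnected.
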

\begin{proof}
A possible difficulty can occur when the spectrum $\sigma(T)$ is contained in the domain $D$ of the definition of $f$, but $\sigma^+(T)\nsubseteq D$. Therefore the resolvent $(\lambda\mathbf1-A)^{-1}$ in integral~\eqref{e:def of f(A)} is defined in $\mathoo B(X)$, but it may not exist in $\mathoo B^+(X)$.

By Proposition~\ref{p:spectrum in a subalgebra}, the causal spectrum $\sigma^+(T)$ is the union of the ordinary spectrum $\sigma(T)$ and (possibly) some bounded components of the resolvent set $\rho(A)$. Since the domain $D$ of $f$ is simply-connected, bounded components of the resolvent set $\rho(A)$ are contained in the domain $D$, provided the spectrum $\sigma(T)$ itself is contained in the domain $D$.
\end{proof}

\begin{proposition}[{\rm\cite{Kurbatov-SMZh75:eng},~\cite[Proposition 2.1.7]{Kurbatov99}}]\label{p:c-spectrum}
The causal spectrum of a lower triangular matrix $\{\,T_{ij}\,\}$ {\rm(}and the causal spectrum of the corresponding operator{\rm)} is the union of the {\rm(}ordinary{\rm)} spectra $\sigma(T_{ii})$ of the diagonal blocks $T_{ii}$.
\end{proposition}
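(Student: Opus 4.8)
The plan is to show directly that a complex number $\lambda$ lies in the causal resolvent set $\rho^+(\{\,T_{ij}\,\})$ if and only if none of the diagonal blocks $\lambda\mathbf1-T_{ii}$ is singular. Since $\lambda\in\rho^+(\{\,T_{ij}\,\})$ means precisely that $\lambda\mathbf1-\{\,T_{ij}\,\}$ is causally invertible, i.e.\ invertible in the algebra $\mathoo M^+$, it suffices to prove the following purely algebraic statement: a lower triangular matrix $M=\{\,M_{ij}\,\}\in\mathoo M^+$ is invertible in $\mathoo M^+$ if and only if each diagonal block $M_{ii}$ is invertible in $\mathoo B(X_i)$. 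Applying this to $M=\lambda\mathbf1-\{\,T_{ij}\,\}$, whose diagonal blocks are $\lambda\mathbf1-T_{ii}$, then yields the claim at once.

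For the ``if'' direction I would split $M$ into its block-diagonal part $D$, with blocks $M_{ii}$, and its strictly lower triangular part $L=M-D$. Assuming every $M_{ii}$ is invertible, $D$ is invertible in $\mathoo M^+$ (its inverse is block diagonal with blocks $M_{ii}^{-1}$), and I would factor $M=D(\mathbf1+D^{-1}L)$. The key observation is that $D^{-1}L$ is again strictly lower triangular and therefore nilpotent: with $n$ blocks one has $(D^{-1}L)^{n}=0$, since each multiplication by a strictly lower triangular matrix pushes the lowest nonzero block diagonal one step further down. Hence $\mathbf1+D^{-1}L$ is invertible with the \emph{finite} Neumann series $\sum_{k=0}^{n-1}(-D^{-1}L)^{k}$ as its inverse, and this inverse is again lower triangular. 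Consequently $M^{-1}=(\mathbf1+D^{-1}L)^{-1}D^{-1}$ is a product of elements of $\mathoo M^+$, hence itself belongs to $\mathoo M^+$, so $M$ is causally invertible.

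For the ``only if'' direction I would start from a causal inverse $M^{-1}\in\mathoo M^+$ and read off the $(i,i)$ block of the identity $MM^{-1}=\mathbf1$. Because $M$ is lower triangular, $M_{ik}=0$ for $k>i$, and because $M^{-1}$ is lower triangular, $(M^{-1})_{ki}=0$ for $k<i$; therefore in the sum $\sum_{k}M_{ik}(M^{-1})_{ki}$ only the term $k=i$ survives, giving $M_{ii}(M^{-1})_{ii}=\mathbf1_{X_i}$. The dual identity $M^{-1}M=\mathbf1$ yields $(M^{-1})_{ii}M_{ii}=\mathbf1_{X_i}$ in the same way, so $M_{ii}$ is invertible in $\mathoo B(X_i)$ with inverse $(M^{-1})_{ii}$. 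This forces $\lambda\notin\sigma(T_{ii})$ for every $i$.

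I expect the only delicate point to be making precise that the strictly lower triangular block matrix is nilpotent and that the resulting inverse stays inside $\mathoo M^+$; everything else is bookkeeping with block multiplication. It is worth stressing that nothing in this argument uses finite-dimensionality of the blocks, which is exactly what distinguishes the causal spectrum from the ordinary spectrum (cf.\ Remark~\ref{r:B_c is not full}): an ordinary inverse of a triangular matrix need not be triangular, but when the inverse is required to lie in $\mathoo M^+$ the diagonal blocks are forced to be invertible, and conversely invertible diagonal blocks always produce a triangular inverse.
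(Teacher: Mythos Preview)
Your proposal is correct and follows essentially the same route as the paper: both reduce the statement to the claim that a lower triangular block matrix is invertible in $\mathoo M^+$ iff all diagonal blocks are invertible, and both treat the ``only if'' direction by reading off the diagonal blocks of $MM^{-1}=\mathbf1$ and $M^{-1}M=\mathbf1$. For the ``if'' direction the paper simply invokes the Gaussian elimination algorithm, whereas you spell out the equivalent argument via the factorisation $M=D(\mathbf1+D^{-1}L)$ and the nilpotency of the strictly lower triangular part; this finite Neumann series is exactly what back-substitution produces (and in fact yields the explicit formula of Theorem~\ref{t:inv block triang matr}), so the two presentations are interchangeable.
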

\begin{proof} It suffices to prove that a lower triangular matrix has a lower triangular inverse if and only if all diagonal blocks $T_{ii}$ are invertible.

Let the lower triangular matrix $\{\,B_{ij}\,\}$ be the inverse of the lower triangular matrix~$\{\,T_{ij}\,\}$. Then it follows from the matrix multiplication rule that $B_{ii}$ are inverses of $T_{ii}$.

Conversely, let the diagonal blocks $T_{ii}$ be invertible.
Then from the Gaussian elimination algorithm it easily follows that the inverse matrix exists and is triangular.
\end{proof}

\section{Functions of block triangular matrices}\label{s:func of block triang matr}

\begin{theorem}\label{t:inv block triang matr}
Let a causal matrix
\begin{equation*}
T=\begin{pmatrix}
   T_{1,1} & 0 & \dots & 0 & 0 \\
   T_{2,1} & T_{2,2} & \dots & 0 & 0 \\
   \dots & \dots & \dots & \dots & \dots \\
   T_{n-1,1} & T_{n-1,2} & \dots & T_{n-1,n-1} & 0 \\
   T_{n,1} & T_{n,2} & \dots & T_{n,n-1} & T_{n,n}
 \end{pmatrix}
\end{equation*}
be causally invertible. Then the elements of the inverse matrix
\begin{equation*}
B=\begin{pmatrix}
   B_{1,1} & 0 & \dots & 0 & 0 \\
   B_{2,1} & B_{2,2} & \dots & 0 & 0 \\
   \dots & \dots & \dots & \dots & \dots \\
   B_{n-1,1} & B_{n-1,2} & \dots & B_{n-1,n-1} & 0 \\
   B_{n,1} & B_{n,2} & \dots & B_{n,n-1} & B_{n,n}
 \end{pmatrix}
\end{equation*}
have the form
\begin{equation*}
B_{i,j}=\sum_{i=i_1>i_2\dots>i_m=j}(-1)^{m+1}T_{i_1,i_1}^{-1}T_{i_1,i_2}T_{i_2,i_2}^{-1}T_{i_2,i_3}\dots
T_{i_{m-1},i_m}T_{i_m,i_m}^{-1},\qquad i\ge j.
\end{equation*}
In particular,
\begin{align*}
B_{i,i}&=T_{i,i}^{-1},\\
B_{i+1,i} &=-T_{i+1,i+1}^{-1}T_{i+1,i}T_{i,i}^{-1},\\
B_{i+2,i} &=-T_{i+2,i+2}^{-1}T_{i+2,i}T_{i,i}^{-1}
+T_{i+2,i+2}^{-1}T_{i+2,i+1}T_{i+1,i+1}^{-1}T_{i+1,i}T_{i,i}^{-1}.
\end{align*}
\end{theorem}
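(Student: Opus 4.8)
The plan is to reduce the inversion to a terminating Neumann series that exploits the nilpotency of the strictly lower part of $T$. First I would split $T = D + L$, where $D = \operatorname{diag}(T_{1,1},\dots,T_{n,n})$ is the block-diagonal part and $L$ gathers the off-diagonal blocks $T_{i,j}$ with $i>j$. By the proof of Proposition~\ref{p:c-spectrum}, causal invertibility of $T$ is equivalent to invertibility of every diagonal block $T_{i,i}$; hence $D$ is invertible with $D^{-1}=\operatorname{diag}(T_{1,1}^{-1},\dots,T_{n,n}^{-1})$, and I may factor $T = D(\mathbf1+N)$ with $N := D^{-1}L$. The crucial structural remark is that $N$ is strictly block lower triangular, so $N^m$ has nonzero blocks only at depth $\ge m$ below the diagonal; therefore $N^n=0$ and the Neumann series is finite,
\[
T^{-1}=(\mathbf1+N)^{-1}D^{-1}=\Bigl(\sum_{m=0}^{n-1}(-1)^m N^m\Bigr)D^{-1}.
\]
Because the right-hand side is again lower triangular, this simultaneously reconfirms that $T^{-1}$ is causal.

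Next I would expand the powers. Using $N_{a,b}=T_{a,a}^{-1}T_{a,b}$ for $a>b$ (and $N_{a,b}=0$ otherwise), the block product yields
\[
\bigl(N^m\bigr)_{i,j}=\sum_{i>k_1>\dots>k_{m-1}>j}T_{i,i}^{-1}T_{i,k_1}\,T_{k_1,k_1}^{-1}T_{k_1,k_2}\cdots T_{k_{m-1},k_{m-1}}^{-1}T_{k_{m-1},j},
\]
and right multiplication by $D^{-1}$ appends the trailing factor $T_{j,j}^{-1}$. Reading the index sequence $i,k_1,\dots,k_{m-1},j$ as a strictly decreasing chain of $m+1$ nodes matches each such term with a summand of the asserted formula; a chain of $m+1$ nodes carries the sign $(-1)^m$ in the series, and since the statement's parameter is the number of nodes, here $m+1$, its sign is $(-1)^{(m+1)+1}=(-1)^m$, so all signs agree. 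Collecting the $m=0$ term, which gives $B_{i,i}=T_{i,i}^{-1}$, together with the terms $m\ge1$ reproduces exactly the stated sum over descending index chains from $i$ down to $j$.

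I expect the only delicate point to be the bookkeeping in the last step: confirming that each descending chain is produced once and only once and that the signs match after relabelling the chain by its number of nodes. An alternative route, avoiding the global series, is to start from the block identity $TB=\mathbf1$, which for $i>j$, after restricting the summation range by triangularity, gives the recursion $B_{i,j}=-T_{i,i}^{-1}\sum_{k=j}^{i-1}T_{i,k}B_{k,j}$; one then inducts on $i-j$, noting that prepending the index $i$ to each chain from $k$ to $j$ flips the sign through the factor $-T_{i,i}^{-1}T_{i,k}$ and rebuilds the formula. In either approach there is no analytic obstruction, since causal invertibility has already secured the invertibility of all diagonal blocks.
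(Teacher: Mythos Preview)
Your argument is correct, but it is genuinely different from the paper's. The paper does not derive the formula; it simply posits the candidate $B$ and verifies $TB=\mathbf1$ (and, symmetrically, $BT=\mathbf1$) by computing a representative off-diagonal entry such as $(TB)_{n,1}$ and observing that all terms cancel in pairs. Your route is constructive: the factorisation $T=D(\mathbf1+N)$ with $N=D^{-1}L$ strictly lower triangular makes $N$ nilpotent, so the finite geometric series $\sum_{m\ge0}(-1)^mN^m$ gives $(\mathbf1+N)^{-1}$ outright, and the formula drops out by reading the $(i,j)$ block of $N^m D^{-1}$. This explains \emph{why} the sum over descending chains appears and makes the sign bookkeeping transparent; the paper's verification is shorter once the formula is in hand but offers no such motivation. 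The inductive alternative you sketch---$B_{i,j}=-T_{i,i}^{-1}\sum_{k=j}^{i-1}T_{i,k}B_{k,j}$---is essentially the recursive mechanism behind the paper's cancellation, so the two approaches are cousins; yours packages it more cleanly via nilpotency.
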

\begin{proof}
Let us verify that $TB=\mathbf1$. Clearly, $(TB)_{ii}=\mathbf1$. We calculate, for example, $(TB)_{n,1}$:
\begin{align*}
(TB)_{n,1}&=T_{n,1}T_{11}^{-1}-T_{n,2}T_{22}^{-1}T_{21}T_{11}^{-1}
+T_{n,3}(-T_{33}^{-1}T_{31}T_{11}^{-1}+T_{33}^{-1}T_{32}T_{22}^{-1}T_{21}T_{11}^{-1})+\dots\\
&+T_{n,n}\sum_{n=i_1>i_2\dots>i_m=1}(-1)^{m+1}T_{n,n}^{-1}T_{n,i_2}T_{i_2,i_2}^{-1}T_{i_2,i_3}\dots
T_{i_{m-1},1}T_{1,1}^{-1}\\
&=T_{n,1}T_{11}^{-1}-T_{n,2}T_{22}^{-1}T_{21}T_{11}^{-1}
+T_{n,3}(-T_{33}^{-1}T_{31}T_{11}^{-1}+T_{33}^{-1}T_{32}T_{22}^{-1}T_{21}T_{11}^{-1})+\dots\\
&+\sum_{n=i_1>i_2\dots>i_m=1}(-1)^{m+1}T_{n,i_2}T_{i_2,i_2}^{-1}T_{i_2,i_3}\dots
T_{i_{m-1},1}T_{1,1}^{-1}=0.
\end{align*}
In a similar way one establishes that $BT=\mathbf1$.
\end{proof}

\begin{theorem}\label{t:res block triang matr}
Let $A\in\mathoo M^+$. Then the causal resolvent {\rm(}i.e., the resolvent $(\lambda\mathbf1-A)^{-1}$ at the points $\lambda\in\rho^+(A)${\rm)} of $A$ has the form
\begin{equation*}
(\lambda\mathbf1-A)^{-1}=\begin{pmatrix}
R_{1,1} & 0 & \dots & 0 & 0 \\
R_{2,1} & R_{2,2} & \dots & 0 & 0 \\
   \dots & \dots & \dots & \dots & \dots \\
   R_{n-1,1} & R_{n-1,2} & \dots & R_{n-1,n-1} & 0 \\
   R_{n,1} & R_{n,2} & \dots & R_{n,n-1} & R_{n,n} \\
 \end{pmatrix},
\end{equation*}
where $R_{ij}$ for $i\ge j$ are defined by the formula
\begin{equation*}
R_{ij}=\sum_{i=i_1>i_2\dots>i_m=j}
(\lambda\mathbf1-A_{i_1,i_1})^{-1}A_{i_1,i_2}(\lambda\mathbf1-A_{i_2,i_2})^{-1}A_{i_2,i_3}\dots A_{i_{m-1},i_m}(\lambda\mathbf1-A_{i_m,i_m})^{-1}.
\end{equation*}
In particular,
\begin{align*}
R_{ii}&=(\lambda\mathbf1-A_{ii})^{-1},\\
R_{i+1,i}&=(\lambda\mathbf1-A_{i+1,i+1})^{-1}A_{i+1,i}(\lambda\mathbf1-A_{ii})^{-1},\\
R_{i+2,i}&=(\lambda\mathbf1-A_{i+2,i+2})^{-1}A_{i+2,i}(\lambda\mathbf1-A_{ii})^{-1}\\
&+(\lambda\mathbf1-A_{i+2,i+2})^{-1}A_{i+2,i+1}(\lambda\mathbf1-A_{i+1,i+1})^{-1}A_{i+1,i}(\lambda\mathbf1-A_{ii})^{-1}.
\end{align*}
\end{theorem}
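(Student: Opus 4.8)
The plan is to obtain this statement as a direct corollary of Theorem~\ref{t:inv block triang matr} applied to the causal matrix $T=\lambda\mathbf1-A$. First I would check that the hypothesis of that theorem is met at every $\lambda\in\rho^+(A)$: by the very definition of the causal resolvent set, $\lambda\in\rho^+(A)$ means that $\lambda\mathbf1-A$ is causally invertible, and by Proposition~\ref{p:c-spectrum} this is equivalent to the invertibility of each diagonal block $\lambda\mathbf1-A_{ii}$. Hence Theorem~\ref{t:inv block triang matr} applies with $T=\lambda\mathbf1-A$; in particular the inverse is again lower triangular, which already accounts for the triangular shape of the resolvent asserted in the statement.

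Next I would simply read off the blocks of $T=\lambda\mathbf1-A$. The diagonal blocks are $T_{ii}=\lambda\mathbf1-A_{ii}$, so that $T_{ii}^{-1}=(\lambda\mathbf1-A_{ii})^{-1}$, while the strictly lower blocks are $T_{ij}=-A_{ij}$ for $i>j$. Substituting these into the explicit formula for $B_{ij}$ from Theorem~\ref{t:inv block triang matr} turns a summand indexed by a chain $i=i_1>i_2>\dots>i_m=j$ into
\[
(-1)^{m+1}(\lambda\mathbf1-A_{i_1,i_1})^{-1}(-A_{i_1,i_2})(\lambda\mathbf1-A_{i_2,i_2})^{-1}\cdots(-A_{i_{m-1},i_m})(\lambda\mathbf1-A_{i_m,i_m})^{-1}.
\]

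The only point requiring attention---and the closest thing to an obstacle, though it is a very mild one---is the sign bookkeeping. Each chain $i_1>i_2>\dots>i_m$ contains exactly $m-1$ off-diagonal factors $T_{i_k,i_{k+1}}=-A_{i_k,i_{k+1}}$, each contributing a factor $-1$; combined with the prefactor $(-1)^{m+1}$ coming from Theorem~\ref{t:inv block triang matr}, the total sign is $(-1)^{m+1}(-1)^{m-1}=(-1)^{2m}=1$. Thus all signs cancel, each summand becomes precisely the product occurring in the asserted formula for $R_{ij}$, and summing over the chains yields the general expression. Specializing to $m=1,2,3$ then reproduces the displayed formulas for $R_{ii}$, $R_{i+1,i}$, and $R_{i+2,i}$, completing the argument.
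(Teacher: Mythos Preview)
Your argument is correct and is essentially the same as the paper's: the paper also derives the result directly from Theorem~\ref{t:inv block triang matr} applied to $T=\lambda\mathbf1-A$, noting only that the sign $(-1)^{m+1}$ disappears because $(\lambda\mathbf1-A)_{ij}=-A_{ij}$ for $i>j$. Your explicit verification of the hypothesis via Proposition~\ref{p:c-spectrum} and your careful sign count merely spell out what the paper leaves implicit.
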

\begin{proof}
The proof follows from Theorem~\ref{t:inv block triang matr}. The sign $(-1)^{m+1}$ disappears because $(\lambda\mathbf1-A)_{ij}=-A_{ij}$ for $i>j$.
\end{proof}

\begin{theorem}\label{t:func block triang matr}
Let a function $f$ be analytic in a neighborhood of the causal spectrum $\sigma^+(A)$ of a matrix $A\in\mathoo M^+$. Then the matrix $F=f(A)$ has the form
\begin{equation*}
F=\begin{pmatrix}
F_{1,1} & 0 & \dots & 0 & 0 \\
F_{2,1} & F_{2,2} & \dots & 0 & 0 \\
   \dots & \dots & \dots & \dots & \dots \\
   F_{n-1,1} & F_{n-1,2} & \dots & F_{n-1,n-1} & 0 \\
   F_{n,1} & F_{n,2} & \dots & F_{n,n-1} & F_{n,n} \\
 \end{pmatrix},
\end{equation*}
where $F_{ij}$ for $i\ge j$ are defined by the formula
\begin{align*}
F_{ij}&=\sum_{i=i_1>i_2\dots>i_m=j}\frac1{2\pi i}\int_\Gamma f(\lambda)
(\lambda\mathbf1-A_{i_1,i_1})^{-1}A_{i_1,i_2}(\lambda\mathbf1-A_{i_2,i_2})^{-1}A_{i_2,i_3}\dots\\
&\times A_{i_{m-1},i_m}(\lambda\mathbf1-A_{i_m,i_m})^{-1}\,d\lambda,
\end{align*}
where $\Gamma$ surrounds the causal spectrum $\sigma^+(A)$ of the matrix $A$.
In particular,
\begin{align*}
F_{ii}&=\int_\Gamma f(\lambda)(\lambda\mathbf1-A_{ii})^{-1}\,d\lambda,\\
F_{i+1,i}
&=\int_\Gamma f(\lambda)(\lambda\mathbf1-A_{i+1,i+1})^{-1}A_{i+1,i}(\lambda\mathbf1-A_{ii})^{-1}\,d\lambda,\\
F_{i+2,i}
&=\int_\Gamma f(\lambda)(\lambda\mathbf1-A_{i+2,i+2})^{-1}A_{i+2,i}(\lambda\mathbf1-A_{ii})^{-1}\,d\lambda\\
&+\int_\Gamma f(\lambda)(\lambda\mathbf1-A_{i+2,i+2})^{-1}A_{i+2,i+1}
(\lambda\mathbf1-A_{i+1,i+1})^{-1}A_{i+1,i}(\lambda\mathbf1-A_{ii})^{-1}\,d\lambda.
\end{align*}
\end{theorem}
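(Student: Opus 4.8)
The plan is to insert the explicit form of the causal resolvent furnished by Theorem~\ref{t:res block triang matr} into the Riesz--Dunford integral~\eqref{e:def of f(A)} and then to read off the blocks. By Proposition~\ref{p:c-spectrum} the causal spectrum satisfies $\sigma^+(A)=\bigcup_{i=1}^n\sigma(A_{ii})$; since $f$ is assumed analytic in a neighborhood of $\sigma^+(A)$, I would first fix a contour $\Gamma$ that surrounds $\sigma^+(A)$, and hence all the diagonal spectra $\sigma(A_{ii})$, while remaining inside the domain of analyticity of $f$. Along such a $\Gamma$ every factor $(\lambda\mathbf1-A_{ii})^{-1}$ is defined and depends analytically on $\lambda$, so each term of the resolvent formula from Theorem~\ref{t:res block triang matr} is a well-defined analytic integrand.

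Next I would compute $F=f(A)$ by the functional calculus carried out \emph{inside the algebra} $\mathoo M^+$, that is, as $F=\frac1{2\pi i}\int_\Gamma f(\lambda)(\lambda\mathbf1-A)^{-1}\,d\lambda$ with $(\lambda\mathbf1-A)^{-1}$ the causal resolvent. By Theorem~\ref{t:res block triang matr} this resolvent is lower triangular for every $\lambda\in\Gamma$; since the set of lower triangular matrices is a closed subspace of $\mathoo M$ and the projection onto any fixed $(i,j)$-block is a bounded linear map, the contour integral $F$ is again lower triangular, which accounts for the zero blocks above the diagonal, and the block projection commutes with the integral, giving $F_{ij}=\frac1{2\pi i}\int_\Gamma f(\lambda)R_{ij}(\lambda)\,d\lambda$ for $i\ge j$.

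Finally I would substitute the explicit expression for $R_{ij}$ from Theorem~\ref{t:res block triang matr} and interchange the finite sum over chains $i=i_1>i_2>\dots>i_m=j$ with the contour integral. This produces exactly the asserted formula for $F_{ij}$, and the displayed special cases $F_{ii}$, $F_{i+1,i}$, $F_{i+2,i}$ follow by listing the admissible chains. Notice that the sign $(-1)^{m+1}$ present in Theorem~\ref{t:inv block triang matr} is already absorbed into the resolvent formula of Theorem~\ref{t:res block triang matr}, so it does not reappear here.

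The only genuine subtlety is that the functional calculus must be executed in the subalgebra $\mathoo M^+$ rather than in the full algebra $\mathoo M$. Because $\mathoo M^+$ need not be full in the infinite-dimensional case (Remark~\ref{r:B_c is not full}), the causal spectrum $\sigma^+(A)$ can be strictly larger than $\sigma(A)$; the contour must therefore surround $\sigma^+(A)$, and the resolvent in~\eqref{e:def of f(A)} must be the causal one. The identification $\sigma^+(A)=\bigcup_i\sigma(A_{ii})$ from Proposition~\ref{p:c-spectrum} is precisely what removes this difficulty: it guarantees that each diagonal resolvent $(\lambda\mathbf1-A_{ii})^{-1}$, and hence the entire integrand, is analytic on and inside $\Gamma$ except on $\sigma^+(A)$, so that the blockwise integrals all make sense and assemble into the causal element $f(A)$.
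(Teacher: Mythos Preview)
Your proposal is correct and follows essentially the same route as the paper: the paper's proof is the single sentence ``Substituting into formula~\eqref{e:def of f(A)} the representation of $(\lambda\mathbf1-A)^{-1}$ from Theorem~\ref{t:res block triang matr}, we obtain the desired result,'' and you have simply unpacked that sentence, adding the justification for why the integral of a lower triangular integrand remains lower triangular and why the block projection commutes with the contour integral. Your discussion of the causal-spectrum subtlety is accurate and in the spirit of Section~\ref{s:c-spectrum}, though the paper itself does not repeat it at this point.
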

\begin{proof}
Substituting into formula~\eqref{e:def of f(A)} the representation of $(\lambda\mathbf1-A)^{-1}$ from Theorem~\ref{t:res block triang matr}, we obtain the desired result.
\end{proof}

\begin{remark}\label{r:Carbonell-Jimenez-Pedroso}
Let a matrix $A\in\mathoo M^+$ has only two non-zero diagonals:
\begin{equation*}
\begin{pmatrix}
   A_{1,1} & 0 & \dots & 0 & 0 \\
   A_{2,1} & A_{2,2} & \dots & 0 & 0 \\
   \dots & \dots & \dots & \dots & \dots \\
   0 & 0 & \dots & A_{n-1,n-1} & 0 \\
   0 & 0 & \dots & A_{n,n-1} & A_{n,n} \\
 \end{pmatrix}.
\end{equation*}
Let a function $f$ be analytic in a neighborhood of the causal spectrum $\sigma^+(A)$ of the matrix $A$.
Then it follows from Theorem~\ref{t:func block triang matr} that the elements $F_{ij}$ for $i\ge j$ of the matrix $F=f(A)$ consist of exactly one summand:
\begin{equation*}
F_{ij}=\frac1{2\pi i}\int_\Gamma f(\lambda)
(\lambda\mathbf1-A_{i,i})^{-1}A_{i,i+1}(\lambda\mathbf1-A_{i+1,i+1})^{-1}A_{i+1,i+2}\dots
A_{j-1,j}(\lambda\mathbf1-A_{j,j})^{-1}\,d\lambda.
\end{equation*}
For the function $f=\exp_{+,\,t}$ this phenomenon was described in~\cite{Carbonell-Jimenez-Pedroso08} and applied in~\cite{Goodwin-Kuprov15}.
\end{remark}

\begin{corollary}\label{c:g of block triang matr}
Let the domain $D\subseteq\mathbb C$ of an analytic function $f$ be simply-connected {\rm(}examples of such functions are $\exp_{\pm,\,t}$ and $g_t${\rm)}. Then the conclusion of Theorem~\ref{t:func block triang matr} is true if the function $f$ is analytic in a neighborhood of the ordinary spectrum $\sigma(A)$ of the matrix $A\in\mathoo M^+$.
\end{corollary}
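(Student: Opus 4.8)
The plan is to deduce the corollary directly from Theorem~\ref{t:func block triang matr} by using Proposition~\ref{p:simply-connected domain} to bridge the only gap between their hypotheses. Theorem~\ref{t:func block triang matr} requires $f$ to be analytic in a neighborhood of the causal spectrum $\sigma^+(A)$, whereas here we are given only that $f$ is analytic near the ordinary spectrum $\sigma(A)$. Since $\sigma(A)\subseteq\sigma^+(A)$ and the two can genuinely differ (Remark~\ref{r:B_c is not full}), the entire task is to show that, for a simply-connected domain $D$, the inclusion $\sigma(A)\subset D$ already forces $\sigma^+(A)\subset D$.

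First I would note that the hypothesis ``$f$ is analytic in a neighborhood of $\sigma(A)$'', together with the standing assumption that $D$ is the domain of $f$, amounts to $\sigma(A)\subset D$. Applying Proposition~\ref{p:simply-connected domain} to $A$ (identifying $\mathoo M^+$ with $\mathoo B^+(X)$), the fact that $D$ is simply-connected then yields $\sigma^+(A)\subset D$. As $\sigma^+(A)$ is a compact subset of the open set $D$, the function $f$ is in particular analytic in a neighborhood of $\sigma^+(A)$, so all the hypotheses of Theorem~\ref{t:func block triang matr} are met. Its formula for the blocks $F_{ij}$ of $f(A)$ then holds verbatim, the contour $\Gamma$ being chosen inside $D$ so as to surround $\sigma^+(A)=\bigcup_i\sigma(A_{ii})$ (Proposition~\ref{p:c-spectrum}); along such a $\Gamma$ every diagonal resolvent $(\lambda\mathbf1-A_{ii})^{-1}$ is defined, so the resolvent substitution carried out in the proof of Theorem~\ref{t:func block triang matr} remains legitimate.

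One bookkeeping point deserves mention: the symbol $f(A)$ could a priori refer either to the functional calculus taken in $\mathoo M$ or to the one taken in the subalgebra $\mathoo M^+$. This ambiguity is harmless, since the second assertion of Proposition~\ref{p:simply-connected domain} is precisely that for simply-connected $D$ the two constructions produce the same element. I do not anticipate a genuine obstacle: the corollary is essentially a repackaging of Proposition~\ref{p:simply-connected domain} together with Theorem~\ref{t:func block triang matr}, and the only slightly delicate ingredient---the contour deformation showing that the integral over a contour surrounding $\sigma^+(A)$ agrees with one surrounding $\sigma(A)$, which uses that $f$ is analytic across the bounded components of $\rho(A)$ absorbed into $\sigma^+(A)$---is already subsumed in that proposition.
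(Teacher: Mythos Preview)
Your proposal is correct and follows essentially the same approach as the paper, which simply states that the proof follows from Proposition~\ref{p:simply-connected domain}. You have merely spelled out in detail what the paper leaves implicit: that the simply-connected hypothesis lets Proposition~\ref{p:simply-connected domain} upgrade $\sigma(A)\subset D$ to $\sigma^+(A)\subset D$, after which Theorem~\ref{t:func block triang matr} applies directly.
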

\begin{proof}
The proof follows from Proposition~\ref{p:simply-connected domain}.
\end{proof}

\begin{remark}\label{r:func block triang matr}
For scalar matrices, Theorem~\ref{t:func block triang matr} goes back to~\cite{Rosenbloom1967}. %see also~\cite{van_Loan75:SME}.
For matrices consisting of finite-dimensional blocks, it was published in~\cite[Theorem 2]{Davis_Ch73}. More precisely, in~\cite{Davis_Ch73} it was considered only the case of polynomial functions $f$; but from the case of polynomials, it follows the case of general analytic functions, since (if $A$ is a scalar matrix) any analytic function can be replaced by its interpolating polynomial.
\end{remark}

\section{Divided differences}\label{s:divided differences}

Let $\mu_1$, $\mu_2$, \dots, $\mu_n$ be given complex numbers (some of them may coincide
with others) called \emph{points of interpolation}. Let a complex-valued function $f$ be
defined and analytic in a neighborhood of these points. \emph{Divided differences}
of the function $f$ with respect to the points $\mu_1$,
$\mu_2$, \dots, $\mu_n$ are defined (see, e.g.,~\cite{%Davis_PhJ,de_Boor,
Gelfond:eng,Jordan}) by the recurrent relations
 \begin{equation}\label{e:divided differences}
 \begin{split}
f^{[0]}(\mu_i)&=f(\mu_i),\\
f^{[1]}(\mu_i,\mu_{i+1})&=\frac{f^{[0]}(\mu_{i+1})-f^{[0]}(\mu_i)}{\mu_{i+1}-\mu_i},\\
f^{[m]}(\mu_i,\dots,\mu_{i+m})&=\frac{f^{[m-1]}(\mu_{i+1},\dots,\mu_{i+m})
-f^{[m-1]}(\mu_{i},\dots,\mu_{i+m-1})} {\mu_{i+m}-\mu_i}.
 \end{split}
 \end{equation}
In these formulas, if the denominator vanishes, then the quotient is understood as the derivative with respect to one of the arguments of the previous divided difference (the naturalness of this agreement can be derived by continuity from Corollary~\ref{c:f[] is continuous}).

\begin{proposition}[{\rm\cite[ch.~1, formula (54)]{Gelfond:eng}}]\label{p:f[] via Gamma}
Let a function $f$ be analytic in an open neighbourhood of the points of interpolation $\mu_1$, $\mu_2$, \dots, $\mu_n$. Then divided differences admit the representation
\begin{equation*}
f^{[n-1]}(\mu_{1},\dots,\mu_{n})=\frac1{2\pi i}\int_{\Gamma}\frac{f(z)}{\Omega(z)}\,dz,
\end{equation*}
where the contour $\Gamma$ encloses all the points of interpolation and
\begin{equation*}
\Omega(z)=\prod_{k=1}^n(z-\mu_k).
\end{equation*}
\end{proposition}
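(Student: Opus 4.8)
The plan is to argue by induction on the number $n$ of interpolation points, using the recurrence~\eqref{e:divided differences} together with Cauchy's integral formula, and then to remove the restriction to distinct nodes by a continuity argument. For the base case $n=1$ the assertion $f^{[0]}(\mu_1)=\frac1{2\pi i}\int_\Gamma\frac{f(z)}{z-\mu_1}\,dz$ is exactly Cauchy's formula, since then $\Omega(z)=z-\mu_1$ and $\mu_1$ lies inside~$\Gamma$.

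For the inductive step I would assume the formula for every system of $n-1$ points and treat $n$ pairwise distinct points $\mu_1,\dots,\mu_n$ (so in particular $\mu_n\ne\mu_1$). Writing $\Omega_1(z)=\prod_{k=2}^{n}(z-\mu_k)$ and $\Omega_2(z)=\prod_{k=1}^{n-1}(z-\mu_k)$, the inductive hypothesis applied to the two $(n-2)$-nd divided differences appearing on the right of~\eqref{e:divided differences}, with one and the same contour $\Gamma$ (which encloses all $n$ points and hence each subsystem), gives
\begin{equation*}
f^{[n-1]}(\mu_1,\dots,\mu_n)=\frac1{\mu_n-\mu_1}\cdot\frac1{2\pi i}\int_\Gamma f(z)\left(\frac1{\Omega_1(z)}-\frac1{\Omega_2(z)}\right)dz.
\end{equation*}
The key elementary identity is that, factoring out the common part $P(z)=\prod_{k=2}^{n-1}(z-\mu_k)$ so that $\Omega_1(z)=(z-\mu_n)P(z)$ and $\Omega_2(z)=(z-\mu_1)P(z)$, one obtains
\begin{equation*}
\frac1{\Omega_1(z)}-\frac1{\Omega_2(z)}=\frac{\Omega_2(z)-\Omega_1(z)}{\Omega_1(z)\Omega_2(z)}=\frac{(\mu_n-\mu_1)P(z)}{(z-\mu_1)(z-\mu_n)P(z)^2}=\frac{\mu_n-\mu_1}{\Omega(z)}.
\end{equation*}
Substituting this back cancels the prefactor $1/(\mu_n-\mu_1)$ and leaves precisely $\frac1{2\pi i}\int_\Gamma\frac{f(z)}{\Omega(z)}\,dz$, which closes the induction for distinct points. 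As an alternative first step, for distinct nodes one may instead evaluate the integral directly by residues, getting $\sum_k f(\mu_k)/\Omega'(\mu_k)=\sum_k f(\mu_k)/\prod_{j\ne k}(\mu_k-\mu_j)$, the classical closed form of the divided difference; either route settles the generic case.

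The main obstacle is the case of coincident nodes, where some denominator $\mu_{i+m}-\mu_i$ in~\eqref{e:divided differences} vanishes, the divided difference is defined through derivatives, and the division by $\mu_n-\mu_1$ in the step above becomes illegitimate. I would dispose of this by continuity. For a fixed contour $\Gamma$, the right-hand side $\frac1{2\pi i}\int_\Gamma\frac{f(z)}{\Omega(z)}\,dz$ depends analytically on $(\mu_1,\dots,\mu_n)$ as long as all the $\mu_k$ remain inside $\Gamma$, because $\Omega(z)$ is polynomial in the nodes and stays bounded away from $0$ on $\Gamma$, so one may differentiate under the integral sign. The left-hand side is also continuous in the nodes (this is exactly the continuity asserted after~\eqref{e:divided differences}; cf.\ Corollary~\ref{c:f[] is continuous}). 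Since both sides are continuous on the region where every $\mu_k$ lies inside $\Gamma$ and agree on the dense open subset of pairwise-distinct configurations, they coincide on the whole region, which yields the formula in full generality.
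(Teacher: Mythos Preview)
The paper does not supply its own proof of this proposition; it is quoted from Gel'fond with no argument, so there is nothing to compare against and I assess your proof on its own.

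Your induction for pairwise distinct nodes is correct and standard. The gap is in the passage to coincident nodes. You justify continuity of the left-hand side by appealing to Corollary~\ref{c:f[] is continuous} (and to the remark following~\eqref{e:divided differences}, which itself points to that corollary). But in the paper Corollary~\ref{c:f[] is continuous} is derived \emph{from} Proposition~\ref{p:f[] via Gamma}; its entire proof is the sentence ``The proof follows from Proposition~\ref{p:f[] via Gamma}.'' So as written your continuity step is circular: you are invoking a consequence of the statement in order to prove the statement.

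A clean repair is to run the induction over \emph{all} node configurations, not only distinct ones. When $\mu_n\neq\mu_1$ your algebraic identity applies verbatim, regardless of coincidences among the intermediate nodes (the induction hypothesis at level $n-1$ now covers repetitions). When $\mu_n=\mu_1$, the convention after~\eqref{e:divided differences} replaces the difference quotient by a derivative of the $(n-2)$nd divided difference; on the integral side one differentiates the level-$(n-1)$ formula under the integral sign with respect to the relevant node, which introduces an extra factor $(z-\mu_1)^{-1}$ in the integrand and yields exactly $\frac1{2\pi i}\int_\Gamma f(z)/\Omega(z)\,dz$. Alternatively, establish continuity of the recursively defined divided differences by an argument independent of the contour formula (for instance via the Hermite--Genocchi simplex representation), after which your density argument becomes legitimate.
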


\begin{corollary}\label{c:f[] is continuous}
Divided differences are differentiable functions of their arguments.
\end{corollary}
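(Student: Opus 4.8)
The plan is to deduce the statement directly from the contour-integral representation of Proposition~\ref{p:f[] via Gamma}. Fix an arbitrary configuration of interpolation points $\mu_1^0,\dots,\mu_n^0$ lying in the open set where $f$ is analytic, and choose a contour $\Gamma$ that encloses all of them while staying inside the domain of $f$. The key geometric observation is that this single $\Gamma$ can be frozen: for all tuples $(\mu_1,\dots,\mu_n)$ in a sufficiently small neighbourhood $V$ of $(\mu_1^0,\dots,\mu_n^0)$, every $\mu_k$ remains inside $\Gamma$ (and inside the domain of $f$), so that the representation
\[
f^{[n-1]}(\mu_1,\dots,\mu_n)=\frac1{2\pi i}\int_\Gamma\frac{f(z)}{\prod_{k=1}^n(z-\mu_k)}\,dz
\]
holds throughout $V$ with one and the same contour.

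First I would record that on the compact set $\Gamma$ the denominator $\prod_k(z-\mu_k)$ is bounded away from zero, uniformly for $(\mu_1,\dots,\mu_n)\in V$, because $z\in\Gamma$ forces $z\neq\mu_k$ and the distance from $\Gamma$ to the (bounded) set of points $\mu_k$ is bounded below. Hence the integrand $(z,\mu_1,\dots,\mu_n)\mapsto f(z)/\prod_k(z-\mu_k)$ is continuous on $\Gamma\times V$, and for each fixed $z\in\Gamma$ it is a holomorphic function of every variable $\mu_j$, since $\mu_j\mapsto 1/(z-\mu_j)$ is holomorphic away from the point $z$. Its partial derivatives
\[
\frac{\partial}{\partial\mu_j}\frac{f(z)}{\prod_k(z-\mu_k)}=\frac{f(z)}{(z-\mu_j)\prod_k(z-\mu_k)}
\]
are then likewise bounded uniformly on $\Gamma\times V$.

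Next I would differentiate under the integral sign. Because $\Gamma$ is compact and the above difference quotients admit a uniform bound on $\Gamma\times V$, the standard theorem on differentiation of a parameter integral applies and shows that $f^{[n-1]}$ is continuously differentiable in each argument on $V$; iterating the argument even yields that it is $C^\infty$ and, in fact, jointly analytic. As the configuration $(\mu_1^0,\dots,\mu_n^0)$ was arbitrary, differentiability holds everywhere, which is the assertion. As a by-product one obtains
\[
\frac{\partial}{\partial\mu_j}f^{[n-1]}(\mu_1,\dots,\mu_n)=\frac1{2\pi i}\int_\Gamma\frac{f(z)}{(z-\mu_j)\prod_k(z-\mu_k)}\,dz,
\]
which is again a divided difference (now with $\mu_j$ repeated), in agreement with the derivative convention adopted for coinciding points in~\eqref{e:divided differences}.

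The only step that requires genuine care rather than routine estimation is the uniform separation of $\Gamma$ from the moving points $\mu_k$, i.e.\ the ability to freeze one contour over a whole neighbourhood of the chosen configuration. Once that is secured, the differentiation under the integral sign is entirely standard, and no further obstacle arises.
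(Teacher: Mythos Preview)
Your proof is correct and follows exactly the route the paper intends: the paper's own proof is simply the one-line remark that the claim follows from Proposition~\ref{p:f[] via Gamma}, and your argument is a careful unpacking of that reference (freezing the contour and differentiating under the integral sign). Nothing needs to be changed.
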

\begin{proof}
The proof follows from Proposition~\ref{p:f[] via Gamma}.
\end{proof}

\begin{corollary}\label{c:domain of f[]}
If $D\subseteq\mathbb C$ is the domain of an analytic function $f$, then $f^{[n-1]}$ is defined in $D^n$.
\end{corollary}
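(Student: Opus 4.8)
The plan is to deduce the statement from the Cauchy-type representation in Proposition~\ref{p:f[] via Gamma}, which expresses $f^{[n-1]}$ as a single contour integral and thereby sidesteps the recursive division in~\eqref{e:divided differences} that can fail when interpolation points coincide. First I would fix an arbitrary tuple $(\mu_1,\dots,\mu_n)\in D^n$ and note that, because the domain $D$ of an analytic function is open, the finite set $\{\mu_1,\dots,\mu_n\}$ has an open neighbourhood contained in $D$; hence $f$ is analytic in a neighbourhood of the interpolation points, which is precisely the hypothesis needed to invoke Proposition~\ref{p:f[] via Gamma}.

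Next I would construct an admissible contour $\Gamma\subset D$ enclosing all the points. Around each distinct value occurring among $\mu_1,\dots,\mu_n$, the openness of $D$ lets me place a circle whose closed disk lies in $D$ and which is disjoint from the others; taking $\Gamma$ to be the union of these circles yields a contour inside $D$ that encircles every $\mu_k$ while keeping $f$ analytic on and inside it. The integral $\frac{1}{2\pi i}\int_\Gamma \frac{f(z)}{\Omega(z)}\,dz$ is then well-defined and, by Proposition~\ref{p:f[] via Gamma}, equals $f^{[n-1]}(\mu_1,\dots,\mu_n)$. Since this produces a value for every tuple in $D^n$, including those with repeated entries, the divided difference $f^{[n-1]}$ is defined on all of $D^n$.

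The step I expect to be the main obstacle is the contour construction when $D$ is not simply connected: one cannot in general enclose all the $\mu_k$ by a single simple closed curve whose interior avoids the complement of $D$. Using a union of small circles about the individual distinct points removes this difficulty, because each little disk lies in $D$ regardless of the global shape of $D$. The coincidence of points then causes no trouble either, since a repeated point simply becomes a higher-order pole of $f/\Omega$ whose residue involves derivatives of $f$, in agreement with the derivative convention adopted in~\eqref{e:divided differences}.
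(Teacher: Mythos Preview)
Your proposal is correct and follows essentially the same route as the paper: both derive the corollary directly from the integral representation in Proposition~\ref{p:f[] via Gamma}. You have simply made explicit the contour construction (a union of small circles about the distinct interpolation points) that the paper leaves implicit in its one-line proof.
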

\begin{proof}
The proof follows from Proposition~\ref{p:f[] via Gamma}.
\end{proof}

 \begin{corollary}\label{c:f[] is indep of order}
Divided differences $f^{[n-1]}(\mu_{1},\dots,\mu_{n})$ are
symmetric function, i.e., they do not depend on the order of their arguments $\mu_1$, \dots, $\mu_{n}$.
 \end{corollary}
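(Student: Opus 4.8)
The plan is to bypass the recursive definition~\eqref{e:divided differences} altogether. That definition singles out the first and last arguments $\mu_i$ and $\mu_{i+m}$, so proving symmetry straight from it would require a somewhat cumbersome induction (e.g.\ showing invariance under each adjacent transposition and then invoking that such transpositions generate the full symmetric group). Instead, I would read the symmetry off directly from the contour-integral representation just established in Proposition~\ref{p:f[] via Gamma}.

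Concretely, I would fix the points $\mu_1,\dots,\mu_n$ together with a contour $\Gamma$ enclosing all of them, and write
$f^{[n-1]}(\mu_1,\dots,\mu_n)=\frac1{2\pi i}\int_\Gamma\frac{f(z)}{\Omega(z)}\,dz$, where $\Omega(z)=\prod_{k=1}^n(z-\mu_k)$. The key observation is then that every ingredient of the right-hand side is invariant under an arbitrary permutation $\pi$ of the indices. The numerator $f(z)$ does not involve the $\mu_k$ at all; the denominator $\Omega(z)=\prod_{k=1}^n(z-\mu_k)$ is a product, hence unchanged when its factors are reordered; and the same contour $\Gamma$ may be retained, since the requirement that $\Gamma$ enclose all the points of interpolation refers only to the \emph{set} $\{\mu_1,\dots,\mu_n\}$, which reordering does not affect. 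Consequently the integrand, and therefore the integral, is unchanged, which is exactly the asserted identity $f^{[n-1]}(\mu_1,\dots,\mu_n)=f^{[n-1]}(\mu_{\pi(1)},\dots,\mu_{\pi(n)})$.

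I expect no genuine obstacle: once Proposition~\ref{p:f[] via Gamma} is in hand the symmetry is immediate, the whole content being the commutativity of the product defining $\Omega$. The one point worth flagging is that this argument automatically covers the confluent case, in which some of the $\mu_k$ coincide and divided differences are interpreted via derivatives as stipulated after~\eqref{e:divided differences}; the representation through $\Omega$, whose repeated zeros already encode that convention, requires no separate treatment.
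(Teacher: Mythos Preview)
Your proposal is correct and follows exactly the paper's own route: the paper's proof is the single line ``The proof follows from Proposition~\ref{p:f[] via Gamma},'' and you have simply unpacked that reference by observing that $\Omega(z)=\prod_k(z-\mu_k)$, the contour $\Gamma$, and $f(z)$ are all invariant under permutations of the $\mu_k$. There is no difference in approach to remark on.
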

 \begin{proof}
The proof follows from Proposition~\ref{p:f[] via Gamma}.
 \end{proof}

\begin{proposition}[{\rm\cite[ch.~1, formula (48)]{Gelfond:eng},~\cite[p.~19, formula~(1)]{Jordan}}]\label{p:repr of Delta}
Let the points of interpolation $\mu_{1},\dots,\mu_{n}$ be distinct. Then divided differences admit the representation
\begin{equation*}
f^{[n-1]}(\mu_{1},\dots,\mu_{n})=\sum_{j=1}^n\frac{f(\mu_j)}{\prod\limits_{\substack{k=1\\k\neq j}}^n(\mu_j-\mu_k)}.
\end{equation*}
\end{proposition}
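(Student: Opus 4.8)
The plan is to deduce this residue-type formula directly from the contour-integral representation of Proposition~\ref{p:f[] via Gamma}, exploiting the hypothesis that the interpolation points are distinct. First I would start from
\[
f^{[n-1]}(\mu_1,\dots,\mu_n)=\frac1{2\pi i}\int_\Gamma\frac{f(z)}{\Omega(z)}\,dz,
\]
where $\Omega(z)=\prod_{k=1}^n(z-\mu_k)$ and $\Gamma$ encloses all the points $\mu_1,\dots,\mu_n$. Because the $\mu_k$ are pairwise distinct, every zero of $\Omega$ is simple, so the integrand $f(z)/\Omega(z)$ --- a quotient of a function analytic inside $\Gamma$ by a polynomial with $n$ simple zeros there --- has exactly simple poles at $z=\mu_1,\dots,\mu_n$ and is analytic elsewhere inside $\Gamma$.

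Next I would apply the residue theorem: the contour integral equals the sum of the residues of the integrand at the enclosed poles,
\[
\frac1{2\pi i}\int_\Gamma\frac{f(z)}{\Omega(z)}\,dz=\sum_{j=1}^n\operatorname*{Res}_{z=\mu_j}\frac{f(z)}{\Omega(z)}.
\]
For a simple pole $\mu_j$ the residue is $f(\mu_j)/\Omega'(\mu_j)$, since $f$ is analytic at $\mu_j$ and $\Omega$ has a simple zero there. The remaining computation is to evaluate $\Omega'(\mu_j)$: differentiating the product $\Omega(z)=\prod_{k=1}^n(z-\mu_k)$ by the product rule yields a sum of $n$ terms, each obtained by dropping one factor, and evaluating at $z=\mu_j$ annihilates every term except the one in which the factor $(z-\mu_j)$ was differentiated, because all the others still contain the vanishing factor $(\mu_j-\mu_j)$. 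Hence $\Omega'(\mu_j)=\prod_{k\neq j}(\mu_j-\mu_k)$, and substituting gives exactly the claimed formula.

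I expect no genuine obstacle here. The only place the hypothesis is used is to guarantee that all the poles are simple, which is precisely what makes the elementary residue $f(\mu_j)/\Omega'(\mu_j)$ legitimate; without distinctness one would face higher-order poles and the right-hand side would require a different (derivative) form. An alternative route would be induction on $n$ via the defining recurrence~\eqref{e:divided differences}, separating the summand $j=n$ and comparing partial-fraction coefficients, but the residue computation above is shorter and reuses Proposition~\ref{p:f[] via Gamma} directly, so I would present that one.
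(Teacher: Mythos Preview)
Your argument is correct and follows exactly the route the paper indicates: the paper's proof is the single line ``The proof follows from Proposition~\ref{p:f[] via Gamma},'' and your residue computation is precisely the natural way to fill in that deduction.
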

 \begin{proof}
The proof follows from Proposition~\ref{p:f[] via Gamma}.
 \end{proof}

For a function $f$ analytic in a neighborhood of the causal spectrum of a matrix $A\in\mathoo M^+$, we denote by $f^{[m-1]}(A;i_1,i_2,\dots,i_m)$ the summands from Theorem~\ref{t:func block triang matr}:
\begin{multline}\label{e:def of f[m-1]}
f^{[m-1]}(A;i_1,i_2,\dots,i_m)
=\frac1{2\pi i}\int_\Gamma f(\lambda)
(\lambda\mathbf1-A_{i_1,i_1})^{-1}A_{i_1,i_2}(\lambda\mathbf1-A_{i_2,i_2})^{-1}A_{i_2,i_3}\dots\\
\times
A_{i_{m-1},i_m}(\lambda\mathbf1-A_{i_m,i_m})^{-1}\,d\lambda,
\end{multline}
where $\Gamma$ surrounds the causal spectrum $\sigma^+(A)$ of the matrix $A$.

\begin{theorem}\label{t:boxdot=boxtimes}
Let a function\/ $f$ be analytic in a neighborhood of the causal spectrum $\sigma^+(A)$ of a matrix $A\in\mathoo M^+$. Then
\begin{equation}\label{e:def of f[m-1]}
\begin{split}
f^{[m-1]}(A&;i_1,i_2,\dots,i_m)
=\frac1{(2\pi i)^m}\int_{\Gamma_{i_1}}\dots\int_{\Gamma_{i_m}}f^{[m-1]}(\lambda_{i_1},\dots,\lambda_{i_m})
(\lambda_{i_1}\mathbf1-A_{i_1,i_1})^{-1}\\
&\times A_{i_1,i_2}
(\lambda_{i_2}\mathbf1-A_{i_2,i_2})^{-1}A_{i_2,i_3}
\dots
A_{i_{m-1},i_m}(\lambda_{i_m}\mathbf1-A_{i_m,i_m})^{-1}\,d\lambda_{i_1}\dots\,d\lambda_{i_m},
\end{split}
\end{equation}
where $\Gamma_{i_k}$ surrounds the spectrum of $A_{i_k,i_k}$.
\end{theorem}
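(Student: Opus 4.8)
The plan is to start from the right-hand $m$-fold integral and collapse it onto the single-contour expression that \emph{defines} $f^{[m-1]}(A;i_1,\dots,i_m)$ in Theorem~\ref{t:func block triang matr}. The engine of the reduction is the contour representation of the scalar divided difference from Proposition~\ref{p:f[] via Gamma}, together with the resolvent identity of Corollary~\ref{c:r_0}.

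First I would fix the contour geometry. Choose each $\Gamma_{i_k}$ to be a small counterclockwise contour encircling $\sigma(A_{i_k,i_k})$, and choose a single large counterclockwise contour $\Gamma$ that encloses all of the $\Gamma_{i_k}$ at once. Because every point $\lambda_{i_k}$ of $\Gamma_{i_k}$ then lies inside $\Gamma$, Proposition~\ref{p:f[] via Gamma} applies with $\Gamma$ as the enclosing contour, giving, for $\lambda_{i_k}\in\Gamma_{i_k}$,
\begin{equation*}
f^{[m-1]}(\lambda_{i_1},\dots,\lambda_{i_m})=\frac1{2\pi i}\int_{\Gamma}\frac{f(z)}{\prod_{k=1}^m(z-\lambda_{i_k})}\,dz.
\end{equation*}
Substituting this into the right-hand side turns it into an $(m+1)$-fold integral over $z,\lambda_{i_1},\dots,\lambda_{i_m}$ with prefactor $(2\pi i)^{-(m+1)}$.

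Next I would interchange the order of integration so that each $\lambda_{i_k}$ is integrated before $z$. This is legitimate by Fubini's theorem: all contours are compact and the integrand is a continuous, hence bounded, operator-valued function of $(z,\lambda_{i_1},\dots,\lambda_{i_m})$, so the iterated Bochner integrals converge absolutely and may be reordered. For each $k$ the off-diagonal factors $A_{i_{k-1},i_k}$ and $A_{i_k,i_{k+1}}$ are independent of $\lambda_{i_k}$ and pull out of the inner integral, leaving
\begin{equation*}
\frac1{2\pi i}\int_{\Gamma_{i_k}}\frac1{z-\lambda_{i_k}}\,(\lambda_{i_k}\mathbf1-A_{i_k,i_k})^{-1}\,d\lambda_{i_k}.
\end{equation*}
Since $z$ lies on $\Gamma$ and hence outside $\Gamma_{i_k}$, the function $\lambda\mapsto\frac1{z-\lambda}=r_z(\lambda)$ is analytic inside $\Gamma_{i_k}$, so by the definition~\eqref{e:def of f(A)} of the functional calculus this inner integral is $r_z(A_{i_k,i_k})$, which equals $(z\mathbf1-A_{i_k,i_k})^{-1}$ by Corollary~\ref{c:r_0}.

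Carrying out all $m$ inner integrations replaces each $(\lambda_{i_k}\mathbf1-A_{i_k,i_k})^{-1}$ by $(z\mathbf1-A_{i_k,i_k})^{-1}$ and absorbs $m$ of the factors $(2\pi i)^{-1}$. What remains is
\begin{equation*}
\frac1{2\pi i}\int_{\Gamma}f(z)\,(z\mathbf1-A_{i_1,i_1})^{-1}A_{i_1,i_2}(z\mathbf1-A_{i_2,i_2})^{-1}\dots A_{i_{m-1},i_m}(z\mathbf1-A_{i_m,i_m})^{-1}\,dz,
\end{equation*}
which, after renaming $z$ to $\lambda$, is exactly the single-contour integral defining $f^{[m-1]}(A;i_1,\dots,i_m)$ in Theorem~\ref{t:func block triang matr}. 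The main obstacle is the opening step: one must check that the enclosing contour $\Gamma$ furnished by Proposition~\ref{p:f[] via Gamma} can be chosen simultaneously large enough to contain every $\Gamma_{i_k}$ (so the divided-difference representation is valid) and yet to keep $z$ in the resolvent set of each $A_{i_k,i_k}$ (so the inner integrals collapse to resolvents at $z$). The nested arrangement above meets both requirements at once; everything after it is Fubini together with the resolvent identity.
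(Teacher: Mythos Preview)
Your argument is correct: inserting the contour representation of the scalar divided difference (Proposition~\ref{p:f[] via Gamma}) into the $m$-fold integral, swapping the order of integration, and collapsing each inner integral via Corollary~\ref{c:r_0} recovers the single-contour definition~\eqref{e:def of f[m-1]}. The one point you flag at the end is handled by your nested geometry: $\Gamma$ can be chosen to surround all of $\sigma^+(A)=\bigcup_i\sigma(A_{ii})$ while staying inside the domain of analyticity of $f$, and every $z\in\Gamma$ then lies outside each small $\Gamma_{i_k}$, hence in the resolvent set of every $A_{i_k,i_k}$.

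The paper's own proof reaches the same conclusion by a different reduction. Instead of Proposition~\ref{p:f[] via Gamma}, it substitutes the partial-fraction formula of Proposition~\ref{p:repr of Delta} for $f^{[m-1]}(\lambda_{i_1},\dots,\lambda_{i_m})$; to ensure the $\lambda_{i_k}$ are distinct it takes the $\Gamma_{i_k}$ to be \emph{nested} (each $\Gamma_{i_k}$ surrounding $\Gamma_{i_{k+1}}$ and all of $\sigma^+(A)$). This produces a sum of $m$ terms; the first is then evaluated by iterated application of Corollary~\ref{c:r_0} and equals the single-contour integral, while the remaining $m-1$ terms are shown to vanish because the relevant singularity sits inside the outermost remaining contour and the integrand decays like $1/\lambda^2$ at infinity. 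Your route avoids this case split at the cost of introducing one auxiliary integration variable; the paper's route avoids the extra variable but must track $m$ separate summands and a more delicate contour nesting. Both rest on the same two ingredients---a representation of the scalar divided difference and Corollary~\ref{c:r_0}---so the difference is organisational rather than conceptual.
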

\begin{proof}
Since $f$ is analytic in a neighborhood of $\sigma^+(A)=\bigcup_{i=1}^n\sigma(A_{ii})$ (see Proposition~\ref{p:c-spectrum}), we may assume without loss of generality that $\Gamma_{i_k}$ in~\eqref{e:def of f[m-1]} surrounds the whole $\bigcup_{i=1}^n\sigma(A_{ii})$ and, moreover, $\Gamma_{i_k}$ surrounds $\Gamma_{i_{k+1}}$, see Figure~\ref{f:Gamma_i}.

 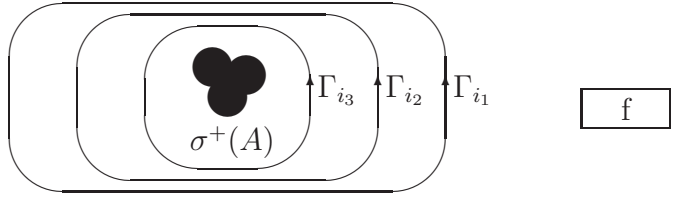
\begin{figure}[htb]
\unitlength=1.mm
\begin{center}
\begin{picture}(50,30)
\put(22.5,18){\circle*{10}} \put(20,15){\circle*{10}} \put(18,19){\circle*{10}}
\put(20,15){\oval(22,19)} \put(31,15){\vector(0,1){3}} \put(32,15){$\Gamma_{i_3}$}
\put(20,15){\oval(40,22)} \put(40,15){\vector(0,1){3}} \put(41,15){$\Gamma_{i_2}$}
\put(20,15){\oval(58,25)} \put(49,15){\vector(0,1){3}} \put(50,15){$\Gamma_{i_1}$}
\put(15,8){$\sigma^+(A)$} \put(67,12){\boxed{$\phantom{aa}f\phantom{aa}$}}
\end{picture}\hfil
\caption{The choice of the contours $\Gamma_{i_k}$ in the proof of Theorem~\ref{t:boxdot=boxtimes}. The symbol $\boxed{f}$ means the localization of singularities of $f$ }\label{f:Gamma_i}
\end{center}
 \end{figure}

Since the contours $\Gamma_{i_k}$ are disjoint, the points $\lambda_{i_1}$, \dots, $\lambda_{i_m}$ in the integrand of~\eqref{e:def of f[m-1]} are distinct. Therefore we can substitute the representation of divided differences from Proposition~\ref{p:repr of Delta} into definition~\eqref{e:def of f[m-1]}:
\begin{equation}\label{e:boxed of f[m]}
\begin{split}
f^{[m-1]}(A&;i_1,i_2,\dots,i_m)
=\frac1{(2\pi i)^m}\int_{\Gamma_{i_1}}\dots\int_{\Gamma_{i_m}}
\sum_{j=1}^{m}\frac{f(\lambda_{i_j})}{\prod\limits_{\substack{k=1\\k\neq j}}^m(\lambda_{i_j}-\lambda_{i_k})}
\\
&\times(\lambda_{i_1}\mathbf1-A_{i_1,i_1})^{-1}A_{i_1,i_2}
\dots
A_{i_{m-1},i_m}(\lambda_{i_m}\mathbf1-A_{i_m,i_m})^{-1}\,d\lambda_{i_1}\dots\,d\lambda_{i_m}.
\end{split}
\end{equation}
Let us begin with the first summand. We have
\begin{align*}
\frac1{2\pi i}
&\int_{\Gamma_{i_{i_1}}}f(\lambda_{i_1})(\lambda_{i_1}\mathbf1-A_{i_1,i_1})^{-1}A_{i_1,i_2}\\
&\times\biggl[\dots
\frac1{2\pi i}\int_{\Gamma_{i_{m-2}}}\frac{1}{\lambda_{i_1}-\lambda_{i_{m-2}}}
\times(\lambda_{i_{m-2}}\mathbf1-A_{i_{m-2},i_{m-2}})^{-1}A_{i_{m-2},i_{m-1}}\\
&\times\biggl[\frac1{2\pi i}\int_{\Gamma_{i_{m-1}}}\frac{1}{\lambda_{i_1}-\lambda_{i_{m-1}}}
(\lambda_{i_{m-1}}\mathbf1-A_{i_{m-1},i_{m-1}})^{-1}A_{i_{m-1},i_m}\\
&\times\biggl[\frac1{2\pi i}\int_{\Gamma_{i_m}}\frac1{\lambda_{i_1}-\lambda_{i_m}}
(\lambda_{i_m}\mathbf1-A_{i_m,i_m})^{-1}\,d\lambda_{i_m}\biggr]
\,d\lambda_{i_{m-1}}\biggr]\,d\lambda_{i_{m-2}}\dots\biggr]\,d\lambda_{i_1}.
\end{align*}
By Corollary~\ref{c:r_0}, for the internal integral, we have
\begin{equation*}
\frac1{2\pi i}\int_{\Gamma_{i_m}}\frac1{\lambda_{i_1}-\lambda_{i_m}}
(\lambda_{i_m}\mathbf1-A_{i_m,i_m})^{-1}\,d\lambda_{i_m}
=(\lambda_{i_1}\mathbf1-A_{i_m,i_m})^{-1}.
\end{equation*}
Now we can calculate the next internal integral (again using Corollary~\ref{c:r_0}):
\begin{multline*}
\frac1{2\pi i}\int_{\Gamma_{i_{m-1}}}\frac{1}{\lambda_{i_1}-\lambda_{i_{m-1}}}
(\lambda_{i_{m-1}}\mathbf1-A_{i_{m-1},i_{m-1}})^{-1}A_{i_{m-1},i_m}
(\lambda_{i_{1}}\mathbf1-A_{i_m,i_m})^{-1}
\,d\lambda_{i_{m-1}}\\
=\biggl[\frac1{2\pi i}\int_{\Gamma_{i_{m-1}}}\frac{1}{\lambda_{i_1}-\lambda_{i_{m-1}}}
(\lambda_{i_{m-1}}\mathbf1-A_{i_{m-1},i_{m-1}})^{-1}\,d\lambda_{i_{m-1}}\biggr]A_{i_{m-1},i_m}
(\lambda_{i_1}\mathbf1-A_{i_m,i_m})^{-1}\\
=(\lambda_{i_1}\mathbf1-A_{i_{m-1},i_{m-1}})^{-1}A_{i_{m-1},i_m}
(\lambda_{i_1}\mathbf1-A_{i_m,i_m})^{-1}.
\end{multline*}
And so on. Finally, we arrive at the representation (for the first summand in~\eqref{e:boxed of f[m]})
\begin{equation*}
\frac1{2\pi i}\int_{\Gamma_{i_1}}f(\lambda_{i_1})
(\lambda_{i_1}\mathbf1-A_{i_1,i_1})^{-1}A_{i_1,i_2}(\lambda_{i_1}\mathbf1-A_{i_2,i_2})^{-1}A_{i_2,i_3}\dots
A_{i_{m-1},i_m}(\lambda_{i_1}\mathbf1-A_{i_m,i_m})^{-1}\,d\lambda_{i_1},
\end{equation*}
which coincides with formula~\eqref{e:def of f[m-1]}.

Next we show that the other summands in~\eqref{e:boxed of f[m]} are zero. Let us consider, for example, the second summand
\begin{align*}
\frac1{(2\pi i)^m}&\int_{\Gamma_{i_1}}\dots\int_{\Gamma_{i_m}}
\frac{f(\lambda_{i_2})}{\prod\limits_{\substack{k=1\\k\neq 2}}^m(\lambda_{i_2}-\lambda_{i_k})}
\\
&\times(\lambda_{i_1}\mathbf1-A_{i_1,i_1})^{-1}A_{i_1,i_2}
\dots
A_{i_{m-1},i_m}(\lambda_{i_m}\mathbf1-A_{i_m,i_m})^{-1}\,d\lambda_{i_1}\dots\,d\lambda_{i_m}.
\end{align*}
Proceeding as above (i.e. successively calculating integrals over all variables except $\lambda_{i_1}$), at the final stage, we arrive at the integral
\begin{equation*}
\frac1{2\pi i}\int_{\Gamma_{i_1}}\frac1{\lambda_{i_2}-\lambda_{i_1}}(\lambda_{i_1}\mathbf1-A_{i_1,i_1})^{-1}
\,d\lambda_{i_1}.
\end{equation*}
We notice that the singularity of the function $\lambda_{i_1}\mapsto\frac1{\lambda_{i_2}-\lambda_{i_1}}$ (i.e., the point $\lambda_{i_2}\in\Gamma_{i_2}$) lies inside the contour $\Gamma_{i_1}$. Hence the integrand $\lambda_{i_1}\mapsto\frac1{\lambda_{i_2}-\lambda_{i_1}}(\lambda_{i_1}\mathbf1-A_{i_1,i_1})^{-1}$ is analytic outside the contour $\Gamma_{i_1}$ and decreases at infinity as $\frac1{\lambda_{i_1}^2}$. Therefore the integral equals zero.
\end{proof}

\begin{remark}\label{r:Theorem 41}
For the case of the first divided difference, a more detailed discussion of formula~\eqref{e:def of f[m-1]} can be found in~\cite[Theorem 41]{Kurbatov-Kurbatova-Oreshina}, see also the references therein.
\end{remark}

\begin{corollary}\label{c:func block triang matr}
Let a function $f$ be analytic in a neighborhood of the causal spectrum $\sigma^+(A)$ of a matrix $A\in\mathoo M^+$. Then the matrix $F=f(A)$ has the form
\begin{equation*}
F=\begin{pmatrix}
F_{11} & 0 & \dots & 0 & 0 \\
F_{21} & F_{22} & \dots & 0 & 0 \\
   \dots & \dots & \dots & \dots & \dots \\
   F_{n-1,1} & F_{n-1,2} & \dots & F_{n-1,n-1} & 0 \\
   F_{n,1} & F_{n,2} & \dots & F_{n,n-1} & F_{n,n} \\
 \end{pmatrix},
\end{equation*}
where $F_{ij}$, $i\ge j$, admits the representation
\begin{equation*}
F_{ij}=\sum_{i=i_1>i_2\dots>i_m=j}f^{[m-1]}(A;i_1,i_2,\dots,i_m).
\end{equation*}
\begin{proof}
The proof follows from Theorems~\ref{t:func block triang matr} and~\ref{t:boxdot=boxtimes}.
\end{proof}
\end{corollary}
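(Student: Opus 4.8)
The plan is to read the statement off Theorem~\ref{t:func block triang matr} once the summands appearing there are recognized, by name, as the quantities $f^{[m-1]}(A;i_1,\dots,i_m)$. No new analytic input is required beyond the two theorems cited; the whole argument is an identification step.

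First I would invoke Theorem~\ref{t:func block triang matr}. Since $f$ is analytic in a neighborhood of $\sigma^+(A)$ and $A\in\mathoo M^+$, that theorem already tells us that $F=f(A)$ is lower triangular and that, for $i\ge j$, its block $F_{ij}$ is the sum, taken over all strictly descending chains $i=i_1>i_2>\dots>i_m=j$, of the single-contour integrals
\[
\frac1{2\pi i}\int_\Gamma f(\lambda)(\lambda\mathbf1-A_{i_1,i_1})^{-1}A_{i_1,i_2}\cdots A_{i_{m-1},i_m}(\lambda\mathbf1-A_{i_m,i_m})^{-1}\,d\lambda,
\]
with $\Gamma$ surrounding $\sigma^+(A)$. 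In particular, the triangular shape of $F$, i.e. the vanishing of the blocks strictly above the diagonal, is part of the conclusion of Theorem~\ref{t:func block triang matr} and needs no separate justification here.

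Second, I would match each summand with its name. By the definition~\eqref{e:def of f[m-1]} of $f^{[m-1]}(A;i_1,\dots,i_m)$, the displayed integral is \emph{exactly} that quantity. Substituting this identification into the formula for $F_{ij}$ gives
\[
F_{ij}=\sum_{i=i_1>i_2>\dots>i_m=j}f^{[m-1]}(A;i_1,\dots,i_m),
\]
which is the assertion. For completeness I would add that Theorem~\ref{t:boxdot=boxtimes} is what licenses the divided-difference notation: it represents each summand as the iterated contour integral of the scalar divided difference $f^{[m-1]}(\lambda_{i_1},\dots,\lambda_{i_m})$, so that $f^{[m-1]}(A;i_1,\dots,i_m)$ is genuinely a divided difference with operator arguments. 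This identification is not, however, needed for the equality above.

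I do not expect a genuine obstacle: the proof is a pure bookkeeping identification, and the only points deserving care are clerical. These are the boundary case $i=j$, for which the only chain has length one ($m=1$) and $f^{[0]}(A;i)=f(A_{ii})$ reproduces the diagonal block, and the verification that the index conventions for the descending chains agree across the statement, Theorem~\ref{t:func block triang matr}, and definition~\eqref{e:def of f[m-1]}, so that the two summations are literally the same sum.
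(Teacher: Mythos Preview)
Your proposal is correct and follows essentially the same approach as the paper: the paper's proof is simply the one-line statement that the corollary follows from Theorems~\ref{t:func block triang matr} and~\ref{t:boxdot=boxtimes}, and you have spelled out precisely how these ingredients combine---Theorem~\ref{t:func block triang matr} gives the triangular shape and the contour-integral summands, and these summands are by definition~\eqref{e:def of f[m-1]} the quantities $f^{[m-1]}(A;i_1,\dots,i_m)$, with Theorem~\ref{t:boxdot=boxtimes} justifying the divided-difference terminology.
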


\begin{theorem}\label{t:exp of block tri mat}
Let $A\in\mathoo M^+$. Then
\begin{equation*}
\exp_{\pm,\,t}(A)=
\begin{pmatrix}
E_{\pm,1,1} & 0 & \dots & 0 & 0 \\
E_{\pm,2,1} & E_{\pm,2,2} & \dots & 0 & 0 \\
   \dots & \dots & \dots & \dots & \dots \\
   E_{\pm,n-1,1} & E_{\pm,n-1,2} & \dots & E_{\pm,n-1,n-1} & 0 \\
   E_{\pm,n,1} & E_{\pm,n,2} & \dots & E_{\pm,n,n-1} & E_{\pm,n,n} \\
 \end{pmatrix},
\end{equation*}
where $E_{\pm,i,j}$, $i\ge j$, admits the representation
\begin{equation*}
E_{\pm,i,j}=\sum_{i=i_1>i_2\dots>i_m=j}\exp_{\pm,\,t}^{[m-1]}(A;i_1,i_2,\dots,i_m);
\end{equation*}
and {\rm(}provided the spectrum $\sigma(A)$ does not intersect the imaginary axis{\rm)}
\begin{equation*}
g_t(A)=
\begin{pmatrix}
G_{1,1} & 0 & \dots & 0 & 0 \\
G_{2,1} & G_{2,2} & \dots & 0 & 0 \\
   \dots & \dots & \dots & \dots & \dots \\
   G_{n-1,1} & G_{n-1,2} & \dots & G_{n-1,n-1} & 0 \\
   G_{n,1} & G_{n,2} & \dots & G_{n,n-1} & G_{n,n} \\
 \end{pmatrix},
\end{equation*}
where $G_{i,j}$, $i\ge j$, admits the representation
\begin{equation*}
G_{i,j}=\sum_{i=i_1>i_2\dots>i_m=j}g_{t}^{[m-1]}(A;i_1,i_2,\dots,i_m).
\end{equation*}
\end{theorem}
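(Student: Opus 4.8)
The plan is to obtain both assertions as direct specializations of Corollary~\ref{c:func block triang matr}, the only genuine work being the verification of the analyticity hypotheses for each of the three functions $\exp_{+,\,t}$, $\exp_{-,\,t}$, and $g_t$ separately.

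First I would treat $\exp_{\pm,\,t}$. For each fixed $t\neq0$ the function $\lambda\mapsto\exp_{\pm,\,t}(\lambda)$ equals either $\pm e^{\lambda t}$ or $0$, and in every case it is entire; in particular it is analytic in a neighborhood of the causal spectrum $\sigma^+(A)$. Hence Corollary~\ref{c:func block triang matr} applies verbatim with $f=\exp_{\pm,\,t}$, yielding the lower triangular shape of $\exp_{\pm,\,t}(A)$ together with the block formula $E_{\pm,i,j}=\sum_{i=i_1>\dots>i_m=j}\exp_{\pm,\,t}^{[m-1]}(A;i_1,\dots,i_m)$, whose summands are the contour integrals~\eqref{e:def of f[m-1]} (equivalently, the iterated integrals of Theorem~\ref{t:boxdot=boxtimes}).

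Next I would treat $g_t$, which is the only step that is not completely automatic, because $g_t$ is undefined on the imaginary axis and, by Remark~\ref{r:B_c is not full}, the causal spectrum $\sigma^+(A)$ may be strictly larger than the ordinary spectrum $\sigma(A)$. The domain of $g_t$ is the union of the two open half-planes $\{\,\Real\lambda\neq0\,\}$, which is simply-connected in the sense adopted in the paper (every simple closed curve lies in one half-plane and contracts there). Under the hypothesis that $\sigma(A)$ is disjoint from the imaginary axis, Proposition~\ref{p:simply-connected domain} guarantees that $\sigma^+(A)$ is disjoint from it as well: the bounded components of $\rho(A)$ that are adjoined to $\sigma(A)$ in order to form $\sigma^+(A)$ (Proposition~\ref{p:c-spectrum}) cannot meet the imaginary axis, since that axis lies entirely in the single unbounded component of $\rho(A)$. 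Consequently $g_t$ is analytic in a neighborhood of $\sigma^+(A)$, and Corollary~\ref{c:g of block triang matr} (equivalently Corollary~\ref{c:func block triang matr}) applies, giving the triangular form of $g_t(A)$ and the stated formula $G_{i,j}=\sum_{i=i_1>\dots>i_m=j}g_t^{[m-1]}(A;i_1,\dots,i_m)$.

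The only real obstacle is thus the $g_t$ case, and it is dispatched precisely by the simply-connectedness of the domain of $g_t$: this is what converts the hypothesis on $\sigma(A)$ into the analyticity of $g_t$ near the possibly larger causal spectrum $\sigma^+(A)$. Once that point is granted, both halves of the theorem are immediate reformulations, in the notation~\eqref{e:def of f[m-1]}, of the block representation already established in Corollary~\ref{c:func block triang matr}.
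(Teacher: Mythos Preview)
Your proposal is correct and follows essentially the same route as the paper: the paper's proof simply cites Corollary~\ref{c:g of block triang matr} together with Theorem~\ref{t:boxdot=boxtimes}, which is exactly the combination you invoke (via Corollary~\ref{c:func block triang matr} after checking, through Proposition~\ref{p:simply-connected domain}, that the simply-connected domains of $\exp_{\pm,\,t}$ and $g_t$ contain $\sigma^+(A)$). The only difference is cosmetic: you unpack the $\exp_{\pm,\,t}$ case separately as entire functions, whereas the paper treats all three functions uniformly through the simply-connectedness argument.
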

\begin{proof}
The proof follows from Corollary~\ref{c:g of block triang matr} and Theorem~\ref{t:boxdot=boxtimes}.
\end{proof}

\section{Divided differences of the functions $\exp_{\pm,\,t}$ and $g_t$}\label{s:exp[n] and g[n]}

\begin{lemma}\label{l:r_nu}
Let the points of interpolation $\lambda_j$ be distinct.
Then the divided differences of the function $r_\lambda(\nu)=\frac1{\lambda-\nu}$ admit the representation
\begin{equation*}
r_\lambda^{[n-1]}(\lambda_1,\dots,\lambda_n)=\frac1{\prod_{j=1}^n(\lambda-\lambda_j)}.
\end{equation*}
\end{lemma}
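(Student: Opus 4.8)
The plan is to feed the function $r_\lambda$ into the Cauchy-type integral representation of divided differences from Proposition~\ref{p:f[] via Gamma}. Since the interpolation points $\lambda_1,\dots,\lambda_n$ are assumed distinct, one could equally invoke Proposition~\ref{p:repr of Delta}, but the contour-integral route produces the answer most directly. I would begin by writing
\[
r_\lambda^{[n-1]}(\lambda_1,\dots,\lambda_n)
=\frac1{2\pi i}\int_\Gamma\frac{r_\lambda(z)}{\Omega(z)}\,dz
=\frac1{2\pi i}\int_\Gamma\frac{1}{(\lambda-z)\prod_{j=1}^n(z-\lambda_j)}\,dz,
\]
where $\Gamma$ encloses all the points $\lambda_1,\dots,\lambda_n$. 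The key observation is that the pole of the integrand at $z=\lambda$ lies \emph{outside} $\Gamma$: indeed, $r_\lambda$ must be analytic in a neighborhood of the interpolation points, so its only singularity, at $z=\lambda$, is separated from the $\lambda_j$ by the contour.

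Next I would exploit the behaviour at infinity. The numerator is constant while the denominator has degree $n+1$, so the integrand decays like $|z|^{-(n+1)}$ and its residue at infinity vanishes. Hence the sum of all finite residues is zero, and the integral over $\Gamma$ (which captures the residues at $z=\lambda_1,\dots,\lambda_n$) equals the negative of the residue at the single remaining pole $z=\lambda$. A one-line computation gives
\[
\operatorname{Res}_{z=\lambda}\frac{1}{(\lambda-z)\prod_{j=1}^n(z-\lambda_j)}
=\frac{-1}{\prod_{j=1}^n(\lambda-\lambda_j)},
\]
so that $r_\lambda^{[n-1]}(\lambda_1,\dots,\lambda_n)=\dfrac{1}{\prod_{j=1}^n(\lambda-\lambda_j)}$, which is the claim.

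As an alternative phrased through Proposition~\ref{p:repr of Delta}, substituting $f=r_\lambda$ yields
\[
r_\lambda^{[n-1]}(\lambda_1,\dots,\lambda_n)
=\sum_{j=1}^n\frac{1}{(\lambda-\lambda_j)\prod_{k\neq j}(\lambda_j-\lambda_k)},
\]
and one then recognizes the right-hand side as the partial-fraction expansion, in the variable $\lambda$, of $1/\prod_{j}(\lambda-\lambda_j)$, whose residue at $\lambda=\lambda_j$ is precisely $1/\prod_{k\neq j}(\lambda_j-\lambda_k)$. I do not expect a genuine obstacle here: the whole content is residue bookkeeping, and the only point demanding care is tracking which poles lie inside versus outside $\Gamma$ and matching the sign produced by the factor $\lambda-z$ in the denominator.
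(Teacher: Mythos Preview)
Your argument is correct, but it follows a different path from the paper.  The paper proves the lemma by a bare-hands induction on $n$ using the recurrence~\eqref{e:divided differences}: from $r_\lambda^{[n-2]}(\lambda_1,\dots,\lambda_{n-1})=\prod_{j=1}^{n-1}(\lambda-\lambda_j)^{-1}$ and the analogous expression with $\lambda_{n-1}$ replaced by $\lambda_n$, one divides their difference by $\lambda_{n-1}-\lambda_n$ and simplifies algebraically.  You instead invoke Proposition~\ref{p:f[] via Gamma} and evaluate the contour integral by residues (or, equivalently, recognize the sum from Proposition~\ref{p:repr of Delta} as the partial-fraction decomposition of the right-hand side).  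Both routes are short; the paper's induction is entirely elementary and self-contained, while your residue computation leans on the analytic machinery already set up in Section~\ref{s:divided differences} and in fact does not need the hypothesis that the $\lambda_j$ be distinct (Proposition~\ref{p:f[] via Gamma} does not require it), so it is marginally more general.  The one place to be careful in your write-up is exactly the point you flagged: that $\lambda$ lies outside $\Gamma$ because $r_\lambda$ must be analytic near the interpolation nodes, and that the factor $\lambda-z=-(z-\lambda)$ produces the compensating sign.
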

\begin{proof}
The proof is by induction on $n$. For $n=1$ we have
\begin{equation*}
r_\lambda^{[1]}(\lambda_1,\lambda_2)
=\frac{\frac1{\lambda-\lambda_1}-\frac1{\lambda-\lambda_2}}{\lambda_1-\lambda_2}
=\frac1{(\lambda-\lambda_1)(\lambda-\lambda_2)}.
\end{equation*}
Assuming that the formula holds for $n-2$, we prove it for $n-1$. We have
\begin{align*}
r_\lambda^{[n-1]}(\lambda_1,\dots,\lambda_n)
=&\frac{r_\lambda^{[n-1]}(\lambda_1,\dots,\lambda_{n-2},\lambda_{n-1})
-r_\lambda^{[n-1]}(\lambda_1,\dots,\lambda_{n-2},\lambda_{n})
}{\lambda_{n-1}-\lambda_{n}}\\
=&\frac{\frac1{\prod_{j=1}^{n-2}(\lambda-\lambda_j)}\frac1{\lambda-\lambda_{n-1}}
-\frac1{\prod_{j=1}^{n-2}(\lambda-\lambda_j)}\frac1{\lambda-\lambda_n}
}{\lambda_{n-1}-\lambda_{n}}\\
=&\frac1{\prod_{j=1}^{n-2}(\lambda-\lambda_j)}
\frac{\frac1{\lambda-\lambda_{n-1}}-\frac1{\lambda-\lambda_n}}{\lambda_{n-1}-\lambda_{n}}\\
=&\frac1{\prod_{j=1}^{n}(\lambda-\lambda_j)}.\qed
\end{align*}
\renewcommand\qed{}
\end{proof}

We recall~\cite{Evgrafov-AF:eng,LePage,van_der_Pol-Bremmer} that the \emph{bilateral {\rm(}\textrm{or} two-sided{\rm)} Laplace transform} of a function $f:\,\mathbb R\to\mathbb C$ is the function
\begin{equation*}
\bigl(\mathcal B f\bigr)(\lambda)=\int_{-\infty}^\infty e^{-\lambda t}f(t)\,dt.
\end{equation*}
The value $\bigl(\mathcal B f\bigr)(\lambda)$ at the point $\lambda\in\mathbb C$ is defined if the integral converges absolutely.
If $f$ equals zero on $(-\infty,0)$ (as the function $\exp_{+,\,(\cdot)}$), this definition takes the form
\begin{equation*}
\bigl(\mathcal B f\bigr)(\lambda)=\int_{0}^\infty e^{-\lambda t}f(t)\,dt.
\end{equation*}
Usually in this case, the integral converges absolutely for $\Real\lambda$ sufficiently large.
If $f$ equals zero on $(0,\infty)$ (as the function $\exp_{-,\,(\cdot)}$), the definition of the bilateral Laplace transform takes the form
\begin{equation*}
\bigl(\mathcal B f\bigr)(\lambda)=\int_{-\infty}^0 e^{-\lambda t}f(t)\,dt.
\end{equation*}
In this case, we assume
that the integral converges absolutely for $\Real\lambda$ sufficiently small.

We recall the following statement.

\begin{lemma}\label{l:Laplace of exp_pm and g_t}
Let $\lambda_0\in\mathbb C$.
\noindent
\begin{enumerate}
 \item[{\rm(a)}] The bilateral Laplace transform of the function $t\mapsto\exp_{+,\,t}(\lambda_0)$ is the function
\begin{equation*}
\bigl(\mathcal B\,\exp_{+,\,(\cdot)}(\lambda_0)\bigr)(\lambda)=\frac1{\lambda-\lambda_0},\qquad\Real\lambda>\Real\lambda_0.
\end{equation*}
 \item[{\rm(b)}] The bilateral Laplace transform of the function $t\mapsto\exp_{-,\,t}(\lambda_0)$ is the function
\begin{equation*}
\bigl(\mathcal B\,\exp_{-,\,(\cdot)}(\lambda_0)\bigr)(\lambda)=\frac1{\lambda-\lambda_0},
\qquad\Real\lambda<\Real\lambda_0.
\end{equation*}
 \item[{\rm(c)}] The bilateral Laplace transform of the function $t\mapsto g_{t}(\lambda_0)$, $\Real\lambda_0\neq0$, is the function
     {\rm(}the complete domain of the function of $\mathcal B\,g_{(\cdot)}(\lambda_0)$ is $\Real\lambda>\Real\lambda_0$ if $\Real\lambda_0<0$ and is $\Real\lambda<\Real\lambda_0$ if $\Real\lambda_0>0${\rm)}
\begin{equation*}
\bigl(\mathcal B\,g_{(\cdot)}(\lambda_0)\bigr)(\lambda)=\frac1{\lambda-\lambda_0},\qquad|\Real\lambda|<|\Real\lambda_0|.
\end{equation*}
%\begin{equation*}
%\bigl(\mathcal B\,g_{(\cdot)}(\lambda_0)\bigr)(\lambda)=\frac1{\lambda-\lambda_0},
%\qquad\begin{array}{c}
%\text{$\Real\lambda>\Real\lambda_0$ if\/ $\Real\lambda_0<0$},\\
%\\
%\text{$\Real\lambda<\Real\lambda_0$ if\/ $\Real\lambda_0>0$}.
%\end{array}
%\end{equation*}
\end{enumerate}
\end{lemma}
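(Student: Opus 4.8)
The plan is to compute each transform directly from its definition, since all three reduce to elementary exponential integrals; the only care needed is in tracking the half-plane of absolute convergence.

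For part~(a), I would substitute the definition of $\exp_{+,\,t}(\lambda_0)$ into the one-sided integral and obtain $\int_0^\infty e^{(\lambda_0-\lambda)t}\,dt$. This converges absolutely precisely when $\Real(\lambda_0-\lambda)<0$, i.e.\ when $\Real\lambda>\Real\lambda_0$, and in that half-plane it evaluates to $1/(\lambda-\lambda_0)$. For part~(b), the same substitution with $\exp_{-,\,t}(\lambda_0)=-e^{\lambda_0 t}$ on $(-\infty,0)$ yields $-\int_{-\infty}^0 e^{(\lambda_0-\lambda)t}\,dt$; here absolute convergence requires $\Real(\lambda_0-\lambda)>0$, i.e.\ $\Real\lambda<\Real\lambda_0$, and the integral again equals $1/(\lambda-\lambda_0)$, the sign in the integrand compensating the value of the antiderivative at the lower limit.

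For part~(c), I would simply invoke the definition of $g_t$, which coincides with $\exp_{-,\,t}$ when $\Real\lambda_0>0$ and with $\exp_{+,\,t}$ when $\Real\lambda_0<0$. Hence the value of the transform is $1/(\lambda-\lambda_0)$ in either case, by parts~(b) and~(a) respectively, and its full domain of absolute convergence is $\Real\lambda<\Real\lambda_0$ when $\Real\lambda_0>0$ and $\Real\lambda>\Real\lambda_0$ when $\Real\lambda_0<0$, exactly as stated in the parenthetical remark. The symmetric strip $|\Real\lambda|<|\Real\lambda_0|$ is then the largest band about the imaginary axis contained in whichever of these half-planes applies, which I would confirm by treating the two sign cases separately: if $\Real\lambda_0>0$ the condition reads $-\Real\lambda_0<\Real\lambda<\Real\lambda_0$, a subset of $\Real\lambda<\Real\lambda_0$, and if $\Real\lambda_0<0$ it reads $\Real\lambda_0<\Real\lambda<-\Real\lambda_0$, a subset of $\Real\lambda>\Real\lambda_0$.

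There is no genuine obstacle here: the computation is routine, and the only place demanding attention is the bookkeeping of convergence regions, in particular verifying that the unified condition $|\Real\lambda|<|\Real\lambda_0|$ does lie inside the true domain in both sign cases.
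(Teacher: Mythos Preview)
Your proposal is correct and follows essentially the same approach as the paper: the paper simply notes that assertion~(a) is classical, that all three assertions reduce to straightforward calculations, and that (c) follows from the definition of $g_t$ together with (a) and (b). Your write-up carries out precisely these elementary computations and the case split for (c), so there is nothing to add.
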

\begin{proof}
Assertion (a) is widely known~\cite[pp.~300, 305]{LePage}.
The proofs of all assertions are reduced to straightforward calculations.
The proof of assertion (c) can also be obtained from the definition of $g_t$ and (a) and (b).
\end{proof}

We recall~\cite[ch.~6]{Rudin-FA:eng} that the \emph{convolution} of two summable functions $f,g:\,\mathbb R\to\mathbb C$ is the function
\begin{equation*}
\bigl(f*g\bigr)(t)=\int_{-\infty}^{\infty}f(s)g(t-s)\,ds.
\end{equation*}
If $f(t)=0$ and $g(t)=0$ for $t<0$, then this formula takes the form
\begin{equation*}
\bigl(f*g\bigr)(t)=
\begin{cases}
\int_{0}^{t}f(s)g(t-s)\,ds& \text{for $t>0$},\\
0& \text{for $t<0$}.
\end{cases}
\end{equation*}
If $f(t)=0$ and $g(t)=0$ for $t>0$, then the definition of convolution takes the form
\begin{equation*}
\bigl(f*g\bigr)(t)=
\begin{cases}
0& \text{for $t>0$},\\
\int_{t}^{0}f(s)g(t-s)\,ds& \text{for $t<0$}.
\end{cases}
\end{equation*}

\begin{theorem}\label{t:exp_+[n] and g_t[n]}
\noindent
\begin{enumerate}
 \item[{\rm(a)}] The divided differences of the function $t\mapsto\exp_{+,\,t}$ admit the representation
\begin{equation*}
\exp_{+,\,(\cdot)}^{[n-1]}(\lambda_1,\dots,\lambda_n)
=\exp_{+,\,(\cdot)}(\lambda_1)*\dots*\exp_{+,\,(\cdot)}(\lambda_n).
\end{equation*}
For example, for $t>0$, we have
\begin{align*}
\exp_{+,\,t}^{[1]}(\lambda_1,\lambda_2)&=\int_0^t\exp_{+,\,s}(\lambda_1)\exp_{+,\,t-s}(\lambda_2)\,ds,\\
\exp_{+,\,t}^{[2]}(\lambda_1,\lambda_2,\lambda_3)&
=\int_0^t\int_0^{r}\exp_{+,\,s}(\lambda_1)\exp_{+,\,r-s}(\lambda_2)\exp_{+,\,t-r}(\lambda_3)\,ds\,dr.
\end{align*}
 \item[{\rm(b)}] The divided differences of the function $t\mapsto\exp_{-,\,t}$ admit the representation
\begin{equation*}
\exp_{-,\,(\cdot)}^{[n-1]}(\lambda_1,\dots,\lambda_n)
=\exp_{-,\,(\cdot)}(\lambda_1)*\dots*\exp_{-,\,(\cdot)}(\lambda_n).
\end{equation*}
For example, for $t<0$, we have
\begin{align*}
\exp_{-,\,t}^{[1]}(\lambda_1,\lambda_2)&=\int_{t}^0\exp_{-,\,s}(\lambda_1)\exp_{-,\,t-s}(\lambda_2)\,ds,\\
\exp_{-,\,t}^{[2]}(\lambda_1,\lambda_2,\lambda_3)&
=\int_{t}^0\int_{r}^0\exp_{-,\,s}(\lambda_1)\exp_{-,\,r-s}(\lambda_2)\exp_{-,\,t-r}(\lambda_3)\,ds\,dr.
\end{align*}
 \item[{\rm(c)}] The divided differences of the function $t\mapsto g_{t}$ admit the representation
\begin{equation*}
g_{(\cdot)}^{[n-1]}(\lambda_1,\dots,\lambda_n)
=g_{(\cdot)}(\lambda_1)*\dots*g_{(\cdot)}(\lambda_n),
\qquad\Real\lambda_1,\dots,\Real\lambda_n\neq0.
\end{equation*}
For example, for $t\neq0$, we have
\begin{align*}
g_{t}^{[1]}(\lambda_1,\lambda_2)&=\int_{-\infty}^{\infty}g_{s}(\lambda_1)g_{t-s}(\lambda_2)\,ds,%\label{e:g[1]}
\\
g_{t}^{[2]}(\lambda_1,\lambda_2,\lambda_3)
&=\int_{-\infty}^{\infty}\int_{-\infty}^{\infty}g_{s}(\lambda_1)g_{r-s}(\lambda_2)g_{t-r}(\lambda_3)\,ds\,dr.\notag
\end{align*}
\end{enumerate}
\end{theorem}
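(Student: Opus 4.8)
The plan is to prove all three identities in one stroke by showing that the two sides of each equation have the same bilateral Laplace transform in the variable $t$, and then invoking the injectivity of $\mathcal B$ on a common vertical strip of absolute convergence. The three parts are structurally identical, so I would carry them in parallel, keeping track only of the relevant convergence region: $\Real\lambda>\max_j\Real\lambda_j$ in part~(a), $\Real\lambda<\min_j\Real\lambda_j$ in part~(b), and $|\Real\lambda|<\min_j|\Real\lambda_j|$ in part~(c), where the hypothesis $\Real\lambda_j\neq0$ guarantees that the last strip is nonempty.

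First I would treat the convolution side. Each of the functions $t\mapsto\exp_{\pm,\,t}(\lambda_0)$, and (for $\Real\lambda_0\neq0$) $t\mapsto g_t(\lambda_0)$, is summable on $\mathbb R$, so the iterated convolution is well defined and summable, and the classical convolution theorem together with Lemma~\ref{l:Laplace of exp_pm and g_t} gives
\[
\mathcal B\bigl(\exp_{+,\,(\cdot)}(\lambda_1)*\dots*\exp_{+,\,(\cdot)}(\lambda_n)\bigr)(\lambda)=\prod_{j=1}^n\frac1{\lambda-\lambda_j},
\]
valid on the indicated half-plane, and likewise in the two remaining cases.

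Next I would compute the Laplace transform of the divided-difference side. Assume first that $\lambda_1,\dots,\lambda_n$ are distinct. By Proposition~\ref{p:repr of Delta} the divided difference in the variable $\lambda$ is a linear combination $\exp_{+,\,t}^{[n-1]}(\lambda_1,\dots,\lambda_n)=\sum_{j=1}^n\exp_{+,\,t}(\lambda_j)\big/\prod_{k\neq j}(\lambda_j-\lambda_k)$ whose coefficients are constant in $t$. Applying $\mathcal B$ term by term and using Lemma~\ref{l:Laplace of exp_pm and g_t} once more, the transform equals $\sum_{j=1}^n r_\lambda(\lambda_j)\big/\prod_{k\neq j}(\lambda_j-\lambda_k)$, which by Proposition~\ref{p:repr of Delta} is exactly $r_\lambda^{[n-1]}(\lambda_1,\dots,\lambda_n)$; Lemma~\ref{l:r_nu} then evaluates this as $\prod_{j=1}^n 1/(\lambda-\lambda_j)$. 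Thus the two sides share the same bilateral Laplace transform on a nonempty strip, and since both are continuous for $t\neq0$, the injectivity of $\mathcal B$ (equivalently, that of the Fourier transform along each vertical line of the strip) forces them to coincide.

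Finally I would remove the distinctness assumption by continuity: the left-hand side depends continuously, indeed differentiably, on $(\lambda_1,\dots,\lambda_n)$ by Corollary~\ref{c:f[] is continuous}, while the right-hand side depends continuously on these parameters through the convolution integrals, so the identity passes to arbitrary tuples in the limit. The part I expect to require the most care is not the algebraic core but the analytic bookkeeping of the convergence regions: one must check that the strip on which both transforms converge absolutely is nonempty and that the convolution theorem and the uniqueness theorem for $\mathcal B$ apply there simultaneously. This is precisely where the sign and half-plane conventions recorded in Lemma~\ref{l:Laplace of exp_pm and g_t} are used, and it is the main technical obstacle.
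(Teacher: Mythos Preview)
Your proposal is correct and follows essentially the same route as the paper: for distinct nodes expand the divided difference via Proposition~\ref{p:repr of Delta}, take the bilateral Laplace transform term by term using Lemma~\ref{l:Laplace of exp_pm and g_t}, identify the result as $r_\lambda^{[n-1]}(\lambda_1,\dots,\lambda_n)=\prod_j(\lambda-\lambda_j)^{-1}$ via Lemma~\ref{l:r_nu}, match it with the transform of the convolution, and then pass to coinciding nodes by continuity (Corollary~\ref{c:f[] is continuous}). One small slip worth fixing: the functions $t\mapsto\exp_{\pm,\,t}(\lambda_0)$ are \emph{not} summable on $\mathbb R$ unless $\pm\Real\lambda_0<0$, but this does not damage your argument, since the one-sided convolutions in (a) and (b) are well defined for locally integrable functions supported on a half-line and you have already recorded the correct half-planes of absolute convergence for $\mathcal B$.
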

\begin{remark}\label{r:DD}
Assertion (a) is established in~\cite{Van_Loan78}.
Assertion (b) is proved in a similar way.
Assertion (c) is proved in~\cite[p.~53]{Godunov94:ODE:eng} for other aims. For completeness, we give here an independent proof of (c).
\end{remark}

\begin{proof}
Suppose that the points of interpolation $\lambda_j$ are distinct. By Proposition~\ref{p:repr of Delta}, we have
\begin{equation*}
g_t^{[n-1]}(\lambda_1,\dots,\lambda_n)
=\sum_{j=1}^n\frac{g_{t}(\lambda_j)}{\prod\limits_{\substack{k=1\\k\neq j}}^n(\lambda_j-\lambda_k)}.
\end{equation*}
Form this representation and Lemma~\ref{l:Laplace of exp_pm and g_t}, it easily follows that the bilateral Laplace transform of the function $t\mapsto g_t^{[n-1]}(\lambda_1,\dots,\lambda_n)$ is
\begin{equation*}
\bigl(\mathcal B\,g_{(\cdot)}^{[n-1]}(\lambda_1,\dots,\lambda_n)\bigr)(\lambda)
=\sum_{j=1}^n\frac1{\lambda-\lambda_j}\frac{1}{\prod\limits_{\substack{k=1\\k\neq j}}^n(\lambda_j-\lambda_k)}.
\end{equation*}
By Proposition~\ref{p:repr of Delta}, the last expression is the $(n-1)$-th divided difference of the function $r_\lambda(\nu)=\frac1{\lambda-\nu}$. By Lemma~\ref{l:r_nu},
\begin{equation*}
r_\lambda^{[n-1]}(\lambda_1,\dots,\lambda_n)=\frac1{\prod_{j=1}^n(\lambda-\lambda_j)}.
\end{equation*}

We apply the inverse bilatiral Laplace transform to tht function $\lambda\mapsto r_\lambda^{[n-1]}(\lambda_1,\dots,\lambda_n)$.
Clearly, the restriction of the bilatiral Laplace transform to the imaginary axis is the Fourier transform. The Fourier transform maps the convolution of functions to the product of their images~\cite[p.~337]{LePage}, which implies assertion (c).

The case of coinciding points of interpolation $\lambda_j$ follows from continuity.
\end{proof}

\section{The divided differences $\exp_{\pm,\,t}^{[m]}$ and $g_t^{[m]}$ with operator arguments}\label{s:exp and g of A}
In this Section, we apply previous results to the calculation of the functions $\exp_{\pm,\,t}^{[m]}$ and $g_t^{[m]}$ with operator arguments.

\begin{theorem}\label{t:g_t[n]}
Let $A\in\mathoo M^+$. Then for $t>0$, we have
\begin{align*}
\exp_{+,\,t}^{[m-1]}&(A;i_1,i_2,\dots,i_{m-1},i_m)
=\int_0^t\int_0^{s_{m-1}}\dots\int_0^{s_2}
\exp_{+,\,s_1}(A_{i_1,i_1})A_{i_1,i_2}\exp_{+,\,s_2-s_1}(A_{i_2,i_2})\\
&\times A_{i_2,i_3}\dots
\exp_{+,\,s_{m-1}-s_{m-2}}(A_{i_{m-1},i_{m-1}}) A_{i_{m-1},i_m}\exp_{+,\,t-s_{m-1}}(A_{i_m,i_m})
\,ds_1\,\dots\,ds_{m-1};\\
\intertext{for $t<0$, we have}
\exp_{-,\,t}^{[m-1]}&(A;i_1,i_2,\dots,i_{m-1},i_m)
=\int_t^0\int_{s_{m-1}}^0\dots\int_{s_2}^0
\exp_{-,\,s_1}(A_{i_1,i_1})A_{i_1,i_2}\exp_{-,\,s_2-s_1}(A_{i_2,i_2})\\
&\times A_{i_2,i_3}\dots
\exp_{-,\,s_{m-1}-s_{m-2}}(A_{i_{m-1},i_{m-1}}) A_{i_{m-1},i_m}\exp_{-,\,t-s_{m-1}}(A_{i_m,i_m})
\,ds_1\,\dots\,ds_{m-1};\\
\intertext{and {\rm(}if the spectrum $\sigma(A)$ is disjoint from the imaginary axis{\rm)} for $t\neq0$, we have}
g_t^{[m-1]}&(A;i_1,i_2,\dots,i_{m-1},i_m)
=\int_{-\infty}^{\infty}\dots\int_{-\infty}^{\infty}
g_{s_1}(A_{i_1,i_1})A_{i_1,i_2}\;g_{s_2-s_1}(A_{i_2,i_2})\\
&\times A_{i_2,i_3}\dots g_{s_{m-1}-s_{m-2}}(A_{i_{m-1},i_{m-1}})A_{i_{m-1},i_m}\;g_{t-s_{m-1}}(A_{i_m,i_m})
\,ds_1\,\dots\,ds_{m-1}.
\end{align*}
\end{theorem}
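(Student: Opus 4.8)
The plan is to combine the multi-contour representation of the operator divided difference from Theorem~\ref{t:boxdot=boxtimes} with the scalar convolution representation from Theorem~\ref{t:exp_+[n] and g_t[n]}, and then to decouple the contour variables. I would describe the argument for $g_t$ in detail; the two $\exp_{\pm,\,t}$ cases are identical except that the integrations become simpler. First I would start from Theorem~\ref{t:boxdot=boxtimes}, which writes $g_t^{[m-1]}(A;i_1,\dots,i_m)$ as the $m$-fold integral over pairwise disjoint contours $\Gamma_{i_1},\dots,\Gamma_{i_m}$ (each $\Gamma_{i_k}$ surrounding $\sigma(A_{i_k,i_k})$) of the scalar factor $g_t^{[m-1]}(\lambda_{i_1},\dots,\lambda_{i_m})$ times the operator product $(\lambda_{i_1}\mathbf1-A_{i_1,i_1})^{-1}A_{i_1,i_2}\cdots A_{i_{m-1},i_m}(\lambda_{i_m}\mathbf1-A_{i_m,i_m})^{-1}$. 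Into the scalar factor I would substitute its convolution representation from Theorem~\ref{t:exp_+[n] and g_t[n]}(c), writing $g_t^{[m-1]}(\lambda_{i_1},\dots,\lambda_{i_m})=\int_{\mathbb R^{m-1}}g_{s_1}(\lambda_{i_1})g_{s_2-s_1}(\lambda_{i_2})\cdots g_{t-s_{m-1}}(\lambda_{i_m})\,ds_1\cdots ds_{m-1}$.

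The point of this substitution is that each contour variable $\lambda_{i_k}$ now occurs in exactly one scalar factor $g_{\tau_k}(\lambda_{i_k})$ --- with $\tau_1=s_1$, $\tau_k=s_k-s_{k-1}$ for $1<k<m$, and $\tau_m=t-s_{m-1}$ --- and in exactly one resolvent $(\lambda_{i_k}\mathbf1-A_{i_k,i_k})^{-1}$. Consequently, once the $m$ contour integrations are interchanged with the integration in $s_1,\dots,s_{m-1}$, they decouple into the $m$ single integrals $\frac1{2\pi i}\int_{\Gamma_{i_k}}g_{\tau_k}(\lambda_{i_k})(\lambda_{i_k}\mathbf1-A_{i_k,i_k})^{-1}\,d\lambda_{i_k}$ (the constant blocks $A_{i_k,i_{k+1}}$ carry no $\lambda$ and pull out of every contour integral), each of which equals $g_{\tau_k}(A_{i_k,i_k})$ by the definition~\eqref{e:def of f(A)} of a function of an operator; here one uses that $\Gamma_{i_k}$ surrounds $\sigma(A_{i_k,i_k})$. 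Reassembling the factors in their original order, interleaved by the blocks $A_{i_k,i_{k+1}}$, gives exactly the claimed integral, with the endpoint factors $g_{s_1}(A_{i_1,i_1})$ and $g_{t-s_{m-1}}(A_{i_m,i_m})$. For $\exp_{\pm,\,t}$ the same computation applies; there the supports of the factors restrict the $s$-integration to the simplex $0\le s_1\le\dots\le s_{m-1}\le t$ (respectively $t\le s_{m-1}\le\dots\le s_1\le0$), producing the stated iterated integrals.

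The one step that needs genuine justification is the interchange of the contour integrations with the $s$-integration. For $\exp_{\pm,\,t}$ this is immediate, since the effective domain is a compact simplex and the integrand is continuous, so Fubini's theorem applies directly. For $g_t$ the variables $s_1,\dots,s_{m-1}$ range over all of $\mathbb R^{m-1}$, so I would instead verify absolute integrability of the operator-valued integrand. By Corollary~\ref{c:g of block triang matr} (through Proposition~\ref{p:simply-connected domain}), the hypothesis $\sigma(A)\cap i\mathbb R=\emptyset$ allows each $\Gamma_{i_k}$ to be chosen within the domain $\{\,\Real\lambda\neq0\,\}$ of $g_t$; being compact, $\Gamma_{i_k}$ then satisfies $\delta_k:=\min_{\lambda\in\Gamma_{i_k}}|\Real\lambda|>0$, and along it one has $|g_{\tau}(\lambda)|\le e^{-\delta_k|\tau|}$ for every $\tau$, together with a uniform bound on the resolvent norm. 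Hence the norm of the whole integrand is dominated by a constant multiple of $\prod_{k=1}^m e^{-\delta|\tau_k|}$ with $\delta:=\min_k\delta_k>0$. Passing to the increments $\tau_1,\dots,\tau_{m-1}$ (a measure-preserving linear change of variables, $\tau_m=t-\tau_1-\dots-\tau_{m-1}$) bounds the $s$-integral of this dominating function by $\bigl(\int_{\mathbb R}e^{-\delta|\tau|}\,d\tau\bigr)^{m-1}<\infty$. This finiteness licenses Fubini's theorem and finishes the $g_t$ case; the distinctness of $\lambda_{i_1},\dots,\lambda_{i_m}$ required by the convolution representation holds automatically because the chosen contours are pairwise disjoint.
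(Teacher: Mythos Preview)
Your proposal is correct and follows exactly the route taken in the paper: start from the multi-contour formula of Theorem~\ref{t:boxdot=boxtimes}, insert the scalar convolution representation of Theorem~\ref{t:exp_+[n] and g_t[n]}, and then carry out the contour integrations to recover $g_{\tau_k}(A_{i_k,i_k})$ (respectively $\exp_{\pm,\tau_k}(A_{i_k,i_k})$). The paper treats only the special case $g_t^{[2]}(A;3,2,1)$ and concludes with the one sentence ``substituting the latter formula into the former one and performing the integration over $\lambda_1,\lambda_2,\lambda_3$, we arrive at the desired formula''; your explicit justification of the Fubini interchange via the exponential decay $|g_\tau(\lambda)|\le e^{-\delta_k|\tau|}$ on contours bounded away from $i\mathbb R$ simply supplies the details the paper omits.
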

\begin{proof}
For simplicity of notation, we prove only the formula
\begin{equation*}
g_t^{[2]}(A;3,2,1)
=\int_{-\infty}^{\infty}\int_{-\infty}^{\infty}
g_{s_1}(A_{33})A_{32}\;g_{s_2-s_1}(A_{22})A_{21}\;g_{t-s_2}(A_{11})\,ds_1\,ds_2.
\end{equation*}
By Theorem~\ref{t:boxdot=boxtimes}, we have
\begin{multline*}
g_{t}^{[2]}(A;3,2,1)
=\frac1{(2\pi i)^3}\int_{\Gamma_1}\int_{\Gamma_2}\int_{\Gamma_3} g_{t}^{[2]}(\lambda_1,\lambda_2,\lambda_3)\\
\times
(\lambda_3\mathbf1-A_{33})^{-1}A_{32}
(\lambda_2\mathbf1-A_{22})^{-1}A_{21}
(\lambda_1\mathbf1-A_{11})^{-1}\,d\lambda_1\,d\lambda_2\,d\lambda_3.
\end{multline*}
By Theorem~\ref{t:exp_+[n] and g_t[n]}, we have
\begin{equation*}
g_{t}^{[2]}(\lambda_1,\lambda_2,\lambda_3)
=\int_{-\infty}^{\infty}\int_{-\infty}^{\infty}g_{s}(\lambda_1)\;g_{r-s}(\lambda_2)\;g_{t-r}(\lambda_3)\,ds\,dr.
\end{equation*}
Substituting the latter formula into the former one and performing the integration over $\lambda_1$, $\lambda_2$, and $\lambda_3$, we arrive at the desired formula.
\end{proof}

Combining Theorems~\ref{t:exp of block tri mat} and~\ref{t:g_t[n]}, we obtain the following examples.

\begin{example}\label{ex:exp of block triang matr}
Let $A$ be the block matrix
\begin{equation}\label{e:matrix A}
A=\begin{pmatrix}
 A_{11} & 0 & 0 \\
 A_{21} & A_{22} & 0 \\
 A_{31} & A_{32} & A_{33} \\
 \end{pmatrix}.
\end{equation}
Then for $t>0$, we have
\begin{equation}\label{e:exp(A) via exp[i]}
\exp_{+,\,t}(A)=\begin{pmatrix}
\exp_{+,\,t}(A_{11}) & 0 & 0 \\
\exp_{+,\,t}^{[1]}(A;2,1) & \exp_{+,\,t}(A_{22}) & 0 \\
\exp_{+,\,t}^{[1]}(A;3,1)
+\exp_{+,\,t}^{[2]}(A;3,2,1)
   &\exp_{+,\,t}^{[1]}(A;3,2) & \exp_{+,\,t}(A_{33}) \\
 \end{pmatrix},
\end{equation}
where
\begin{align}
\exp_{+,\,t}^{[1]}(A;i_1,i_2)
&=\int_0^t\exp_{+,\,s}(A_{i_,1i_1})A_{i_1,i_2}\exp_{+,\,t-s}(A_{i_2,i_2})\,ds,\label{e:exp[1] from Bellman} \\
\exp_{+,\,t}^{[2]}(A;3,2,1)
&=\int_0^t\int_0^{r}\exp_{+,\,s}(A_{33})A_{32}\exp_{+,\,r-s}(A_{22})A_{21}\exp_{+,\,t-r}(A_{11})\,ds\,dr.\notag
\end{align}
\end{example}

\begin{remark}\label{r:Saad}
Integral~\eqref{e:exp[1] from Bellman} was first obtained in~\cite[ch.~10, \S~14, formula (5)]{Bellman60:eng} in a different context. Formula~\eqref{e:exp(A) via exp[i]} for a triangular block matrix (with blocks consisting of scalars) of the size less than or equal to $4\times 4$ was appeared in~\cite[theorem 1]{Van_Loan78} and for a triangular block matrix of any size in~\cite{Carbonell-Jimenez-Pedroso08}.
\end{remark}

\begin{example}\label{ex:g_t of block triang matr}
Let $A$ be the block matrix~\eqref{e:matrix A},
whose spectrum is disjoint from the imaginary axis.
Then for $t\neq0$, we have
\begin{equation*}%\label{e:g_t(A) via g_t[i]}
g_t(A)=\begin{pmatrix}
g_t(A_{11}) & 0 & 0 \\
g_t^{[1]}(A;2,1) & g_t(A_{22}) & 0 \\
g_t^{[1]}(A;3,1)+g_t^{[2]}(A;3,2,1) &g_t^{[1]}(A;3,2) & g_t(A_{33}) \\
 \end{pmatrix},
\end{equation*}
where
\begin{align*}
g_t^{[1]}(A;i_1,i_2)
&=\int_{-\infty}^{\infty}g_s(A_{i_1,i_1})A_{i_1,i_2}g_{t-s}(A_{i_2,i_2})\,ds,\label{e:g_t[1]} \\
g_t^{[2]}(A;3,2,1)
&=\int_{-\infty}^{\infty}\int_{-\infty}^{\infty}
g_{s}(A_{33})A_{32}g_{r-s}(A_{22})A_{21}g_{t-r}(A_{11})\,ds\,dr.\notag
\end{align*}
\end{example}

\section*{Acknowledgments}
The first author was supported by the Ministry of Education and Science of the Russian Federation under state order No.~3.1761.2017/4.6. The second author was supported by the Russian Foundation for Basic Research under research projects No. 16-01-00197.

\providecommand{\bysame}{\leavevmode\hbox to3em{\hrulefill}\thinspace}
\providecommand{\MR}{\relax\ifhmode\unskip\space\fi MR }
% \MRhref is called by the amsart/book/proc definition of \MR.
\providecommand{\MRhref}[2]{%
  \href{http://www.ams.org/mathscinet-getitem?mr=#1}{#2}
}
\providecommand{\href}[2]{#2}

\vskip5mm
\end{document}